\documentclass[12pt]{article}
\usepackage{latexsym,amsfonts,amsmath,graphics,amsthm}
\usepackage{epsfig}
\usepackage{mathrsfs}

\setlength{\bigskipamount}{5ex plus1.5ex minus 2ex}
\setlength{\textheight}{24cm}
\setlength{\textwidth}{16cm}
\setlength{\hoffset}{-1.3cm}
\setlength{\voffset}{-1.8cm}

\newtheorem{theorem}{Theorem}
\newtheorem{lemma}{Lemma}

\newtheorem{definition}{Definition}


\newcommand{\bsalpha}{\boldsymbol{\alpha}}

\newcommand{\bsk}{\boldsymbol{k}}

\newcommand{\bsx}{\boldsymbol{x}}

\newcommand{\bsf}{\boldsymbol{f}}

\newcommand{\bsy}{\boldsymbol{y}}

\newcommand{\icomp}{\mathtt{i}}
\newcommand{\bszero}{\boldsymbol{0}}

\newcommand{\rd}{\,\mathrm{d}}

\newcommand{\NN}{\mathbb{N}}
\newcommand{\ZZ}{\mathbb{Z}}
\newcommand{\QQ}{\mathbb{Q}}
\newcommand{\CC}{\mathbb{C}}

\newcommand{\cS}{\mathcal{S}}
\newcommand{\FF}{\mathbb{Z}}

\newcommand{\RR}{{\mathbb R}}

\newcommand{\walb}{\,_q{\rm wal}}

\makeatletter
\newcommand{\rdots}{\mathinner{\mkern1mu\lower-1\p@\vbox{\kern7\p@\hbox{.}}
\mkern2mu \raise4\p@\hbox{.}\mkern2mu\raise7\p@\hbox{.}\mkern1mu}}
\makeatother

\allowdisplaybreaks

\begin{document}

\title{Metrical lower bounds on the discrepancy of digital Kronecker-sequences}
\author{Gerhard Larcher\thanks{G. L. is partially supported by the Austrian Science Foundation (FWF), project P21943-N18} and Friedrich Pillichshammer}
\date{}
\maketitle

\begin{abstract}
Digital Kronecker-sequences are a non-archimedean analog of classical Kronecker-sequences whose construction is based on Laurent series over a finite field. In this paper it is shown that for almost all digital Kronecker-sequences the star discrepancy satisfies $D_N^\ast \ge c(q,s) (\log N)^s \log \log N$ for infinitely many $N \in \NN$, where $c(q,s)>0$ only depends on the dimension $s$ and on the order $q$ of the underlying finite field, but not on $N$. This result shows that a corresponding metrical upper bound due to Larcher is up to some $\log \log N$ term best possible. 
\end{abstract}

\section{Introduction and statement of the result}

For an $s$ tuple $\bsalpha=(\alpha_1,\ldots,\alpha_s)$ of reals the classical {\it Kronecker-sequence} $\cS(\bsalpha)=(\bsx_n)_{n \ge 0}$ is defined by $$\bsx_n:=(\{n \alpha_1\},\ldots,\{n \alpha_s\})\ \ \mbox{ for }\ \ \ n \in \NN_0,$$ where $\{ x \}$ denotes the fractional part of a real number $x$. It was shown by Weyl \cite{weyl} that that $\cS(\bsalpha)$ is uniformly distributed in the $s$-dimensional unit-cube $[0,1)^s$ if and only if $1,\alpha_1,\ldots,\alpha_s$ are linearly independent over $\QQ$. Quantitative versions of this result can be stated in terms of star discrepancy which is defined as follows:

Let $\cS=(\bsy_n)_{n \ge 0}$ be an infinite sequence in the $s$-dimensional unit-cube $[0,1)^s$. For $\bsx=(x_1,\ldots,x_s) \in [0,1]^s$ and $N \in \NN$ (by $\NN$ we denote the set of positive integers and we set $\NN_0=\NN \cup \{0\}$) the {\it local discrepancy} $D(\bsx,N)$ of $\cS$ is the difference between the number of indices $n=0,\ldots,N-1$ for which $\bsy_n$ belongs to the interval $[\bszero, \bsx)=\prod_{j=1}^s [0,x_j)$ and the expected number $N x_1\cdots x_s$ of points in $[\bszero,\bsx)$ if we assume a perfect uniform distribution on $[0,1]^s$, i.e., $$D(\bsx,N)=\#\{0 \le n < N \ : \ \bsx_n \in [\bszero,\bsx)\}-N x_1 \cdots x_s.$$ 
\begin{definition}[star discrepancy]\rm
The {\it star discrepancy} $D_N^\ast$ of a sequence $\cS$ is the $L_\infty$-norm of the local discrepancy, i.e., $$D_N^\ast(\cS)=\|D(\bsx,N)\|_{\infty}.$$
\end{definition} 

Note that often a normalized version is used for defining the star discrepancy. A sequence $\cS$ is called {\it uniformly distributed} if and only if the normalized star discrepancy $D_N^\ast(\cS)/N$ tends to 0 for growing $N$. Furthermore, the (normalized) star discrepancy can be used to bound the integration error of a quasi-Monte Carlo algorithm based on $\cS$ via the well known Koksma-Hlawka inequality. For more information on uniform distribution, discrepancy and quasi-Monte Carlo integration we refer to \cite{DP10,kuinie,niesiam}.\\

Apart from the one-dimensional case $s=1$, it is very difficult to give good estimates for the star discrepancy of concrete Kronecker-sequences. In a remarkable paper Beck \cite{beck} showed the following metrical result: \\

{\it For arbitrary increasing function $\varphi(n)$ of $n \in \NN$ we have $$D_N^{\ast}(\cS(\bsalpha)) \ll_s (\log N)^s \varphi(\log \log N) \ \ \Leftrightarrow \ \ \sum_{n=1}^\infty \frac{1}{\varphi(n)} < \infty$$ for almost all $\bsalpha \in \RR^s$.}\footnote{Here $A(N,s) \ll_s B(N,s)$ means that there exists a quantity $c(s) > 0$ which depends only on $s$ (and not on $N$) such that $A(N,s) \le c(s) B(N,s)$.} \\

In particular, for almost every $\bsalpha \in \RR^s$ we have $$D_N^{\ast}(\cS(\bsalpha)) \ll_{s,\varepsilon} (\log N)^s (\log \log N)^{1+\varepsilon}$$ for every $\varepsilon >0$, and for almost every $\bsalpha \in \RR^s$ there are infinitely many $N \in \NN$ such that $$D_N^\ast(\cS(\bsalpha)) \ge c(s) (\log N)^s \log \log N$$ with a $c(s)>0$ not depending on $N$. \\

In connection with the construction of digital sequences a ``non-archimedean analog'' to classical Kronecker-sequences has been introduced by Niederreiter~\cite[Section~4]{niesiam} and further investigated by Larcher and Niederreiter~\cite{LN93}.

Let $q$ be a prime number and let $\ZZ_q$ be the finite field of order $q$. We identify $\ZZ_q$ with the set $\{0,1,\ldots,q-1\}$ equipped with arithmetic operations modulo $q$. Let $\ZZ_q[x]$ be the set of all polynomials over $\ZZ_q$ and let $\ZZ_q((x^{-1}))$ be the field of formal Laurent series $$g=\sum_{k=w}^{\infty} a_k x^{-k}\ \ \ \mbox{ with } a_k \in \ZZ_q \ \mbox{ and } \ w \in \ZZ \ \mbox{ with }\ a_w \not=0.$$ The discrete exponential evaluation $\nu$ of $g$ is defined by $\nu(g):=-w$ ($\nu(0):=-\infty$). Furthermore, we define the ``fractional part'' of $g$ by $$\{g\}:= \sum_{k=\max(1,w)}^{\infty} a_k x^{-k}.$$

Throughout the paper we associate a nonnegative integer $n$ with $q$-adic expansion $n=n_0+n_1q+\cdots+n_r q^r$ with the polynomial $n(x)=n_0+n_1 x+\cdots +n_r x^r$ in $\ZZ_q[x]$ and vice versa.

For every $s$-tuple $\bsf=(f_1,\ldots,f_s)$ of elements of $\ZZ_q((x^{-1}))$ we define the sequence $\cS(\bsf)=(\bsx_n)_{n \ge 0}$  by $$\bsx_n=(\{n(x) f_1(x)\}_{|x=q},\ldots,\{n(x) f_s(x)\}_{|x=q})\ \ \mbox{ for }\ \ \ n \in \NN_0.$$ This sequence can be viewed as analog to the classical Kronecker-sequence and is therefore sometimes called a {\it digital Kronecker-sequence} (this terminology will be clearer in a moment). 

In analogy to classical Kronecker-sequences it has been shown in \cite{LN93} that a digital Kronecker-sequence $\cS(\bsf)$ is uniformly distributed in $[0,1)^s$ if and only if $1,f_1,\ldots,f_s$ are linearly independent over $\ZZ_q[x]$. The special case that the $f_i$ are rational functions was studied in \cite{L93} and in \cite{KP12}.

In the analysis of digital Kronecker-sequences one can obviously restrict to the set $\overline{\ZZ}_q((x^{-1}))$ of Laurent series over $\ZZ_q$ with $w \ge 1$, i.e. with $g=\{g\}$.

In analogy to the results of Beck here we are interested in metrical results for the star discrepancy of digital Kronecker-sequences. To tackle this problem we need to introduce a suitable probability measure on $(\overline{\ZZ}_q((x^{-1})))^s$. 

By $\mu$ we denote the normalized Haar-measure on $\overline{\ZZ}_q((x^{-1}))$ and by $\mu_s$ the $s$-fold product measure on $(\overline{\ZZ}_q((x^{-1})))^s$. We remark that $\mu$ has the following rather simple shape: If we identify the elements $\sum_{k=1}^\infty t_k x^{-k}$ of $\overline{\ZZ}_q((x^{-1}))$ where $t_k\not=q-1$ for infinitely many $k$ in the natural way with the real numbers  $\sum_{k=1}^\infty t_k q^{-k}  \in [0,1)$, then, by neglecting the countable many elements where $t_k \not= q-1$ only for finitely many $k$, $\mu$ corresponds to the Lebesgue measure $\lambda$ on $[0,1)$. For example, the ``cylinder set'' $C(c_1,\ldots,c_m)$ consisting of all elements $g=\sum_{k=1}^\infty a_k x^{-k}$ from $\overline{\ZZ}_q((x^{-1}))$ with $a_k=c_k$ for $k=1,\ldots,m$ and arbitrary $a_k \in \ZZ_q$ for $k \ge m+1$ has measure $\mu( C(c_1,\ldots,c_m) )=q^{-m}$. 

In \cite{L95} Larcher proved the following metrical upper bound on the star discrepancy of digital Kronecker-sequences.

\begin{theorem}[Larcher, 1995]\label{th0}
Let $s \in \NN$, let $q$ be a prime number and let $\varepsilon>0$. For $\mu_s$-almost all $\bsf \in (\overline{\ZZ}_q((x^{-1})))^s$ the digital Kronecker-sequence $\cS(\bsf)$ has star discrepancy satisfying $$D_N^\ast(\cS(\bsf)) \le c(q,s,\varepsilon) (\log N)^s (\log\log N)^{2+\varepsilon}$$ with a $c(q,s,\varepsilon)>0$ not depending on $N$.
\end{theorem}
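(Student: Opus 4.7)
The plan is to combine a Walsh-function version of the Erd\H{o}s--Tur\'an--Koksma inequality with a Borel--Cantelli argument that exploits the cylinder-set structure of the Haar measure $\mu$. As a first step, I would reduce to $N = q^m$: given arbitrary $N$ with $q$-adic length $r = \lfloor \log_q N\rfloor + 1$, the first $N$ elements of $\cS(\bsf)$ split into at most $r$ shifted blocks, each of which is a shifted initial segment of length $q^\ell$ of $\cS(\bsf)$ for some $\ell \leq r$. It therefore suffices to bound $D_{q^m}^\ast(\cS(\bsf))$ simultaneously for all $m \leq r$, at the cost of one factor of $r \asymp \log N$.

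For $N = q^m$ the set $\{\bsx_0,\ldots,\bsx_{q^m-1}\}$ is a digital $(t_m(\bsf), m, s)$-net over $\ZZ_q$ whose $i$-th generating matrix is built from the Laurent coefficients of $f_i$. Expanding indicator functions of axis-parallel boxes into $q$-adic Walsh series yields a digital Erd\H{o}s--Tur\'an--Koksma inequality of the form
\[
 D_{q^m}^\ast(\cS(\bsf)) \leq c_s \sum_{\bszero \neq \bsh \in \D_m^\ast(\bsf)} \frac{1}{\rho_q(\bsh)},
\]
where $\D_m^\ast(\bsf)$ is the set of nonzero $\bsh = (h_1,\ldots,h_s) \in \ZZ_q[x]^s$ with $\deg h_i < m$ satisfying the digital orthogonality relation $\nu(\{h_1 f_1 + \cdots + h_s f_s\}) \geq m+1$, and $\rho_q(\bsh) = \prod_{i=1}^s \max(1, q^{\deg h_i})$.

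Almost-sure control of the right-hand side then comes from probabilistic estimates. For fixed nonzero $\bsh \in \ZZ_q[x]^s$ the event $\nu(\{h_1 f_1+\cdots+h_s f_s\}) \geq t$ is a cylinder of $\mu_s$-measure $q^{-t}$ in $\bsf$. Partitioning candidates by their multidegree $\bsd = (\deg h_1,\ldots,\deg h_s)$, the number of $\bsh$ with given $\bsd$ is $\ll q^{d_1+\cdots+d_s}$, so the expected number with $\bsd$ fixed and threshold $t$ is $\ll q^{d_1+\cdots+d_s - t}$. Defining dyadic scales simultaneously in $\rho_q(\bsh)$ and in $t$, one verifies that $\sum_m \sum_{\bsh} q^{-t(\bsh,m)}$ converges whenever $t(\bsh,m) \geq m + (2+\varepsilon)\log_q\log m + \log_q \rho_q(\bsh) + O(\log\log\log m)$. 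The first Borel--Cantelli lemma then rules out, for $\mu_s$-almost every $\bsf$, all but finitely many ``bad'' configurations. Summing $1/\rho_q(\bsh)$ over the remaining vectors via the standard hyperbolic-cross estimate $\sum_{\rho_q(\bsh)\leq R}1/\rho_q(\bsh) \ll_s (\log R)^{s-1}$ and combining with the factor $r$ from the initial reduction produces the claimed bound $(\log N)^s (\log\log N)^{2+\varepsilon}$.

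The main obstacle I expect is this quantitative Borel--Cantelli step: one must calibrate $t(\bsh,m)$ jointly in $\rho_q(\bsh)$ and $m$ so that (i) the sum of measures of violating events converges, yet (ii) after discarding the violations the remaining dual-net sum is no larger than $m^{s-1}(\log m)^{2+\varepsilon}$. A naive split that bounds $\rho_q(\bsh)$ and the valuation $t$ by separate thresholds loses at least one extra $\log\log N$ factor, and only the joint analysis pins down the sharp exponent $2+\varepsilon$, in the spirit of Beck's classical argument. A secondary subtlety is that the reduction from general $N$ to $q^m$ must use the shift-invariance of the underlying digital net, so that a single union bound over $m \leq r$ costs only one factor of $\log N$ rather than a larger power.
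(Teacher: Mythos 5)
First, a point of order: this paper does not prove Theorem~\ref{th0} at all --- it is quoted from Larcher \cite{L95}, and only the lower bound (Theorem~\ref{th1}) is proved here --- so your proposal can only be judged on its own terms. On those terms there are bookkeeping errors and one genuine gap. The displayed duality inequality is mis-normalized: with the paper's (non-normalized) star discrepancy on the left, the dual-net sum must carry a factor $q^m$ (equivalently the left-hand side should be $D^\ast_{q^m}/q^m$); as written it would give the absurd deterministic bound $D^\ast_{q^m}\ll_s m^s$. The hyperbolic-cross estimate is also misquoted: $\sum_{\bsh\ne\bszero,\,\rho_q(\bsh)\le R}\rho_q(\bsh)^{-1}\asymp_s(\log R)^{s}$, not $(\log R)^{s-1}$, since the shell $\rho_q(\bsh)\asymp q^k$ contains about $q^kk^{s-1}$ vectors each contributing $q^{-k}$. (Minor: your sign convention for $\nu$ is the opposite of the paper's, where the dual condition reads $\nu(\{h_1f_1+\cdots+h_sf_s\})\le-(m+1)$, cf.\ Lemma~\ref{le2}.)

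The decisive gap is exactly the calibration step you flag as the main obstacle, and it cannot be repaired within your architecture. Membership of $\bsh$ in the level-$m$ dual net requires the valuation to exceed only $m$, whereas your excluded events require it to exceed roughly $m+\log_q\rho_q(\bsh)+(2+\varepsilon)\log_q\log m$; the sum of their probabilities converges, but only because of the factor $q^{-m}$, and discarding them leaves the bulk of the dual net untouched (their expected contribution to the dual sum is only $O((\log m)^{-(2+\varepsilon)})$). Quantitatively, each of the $\approx q^kk^{s-1}$ vectors with $\rho_q(\bsh)\asymp q^k$ and $\deg h_i<m$ lies in the level-$m$ dual net with probability $\approx q^{-m}$ and then contributes $q^{m-k}$ to the correctly normalized bound, so the quantity you must control has expectation $\asymp m^{s}$ for every $m$; a second-moment computation in the spirit of Lemma~\ref{le4} shows that the shells with $k\ge m$ really do contribute this order for typical $\bsf$. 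Hence the surviving dual-net sum is of size $m^{s}$, not $m^{s-1}(\log m)^{2+\varepsilon}$, and since your reduction to powers of $q$ pays a full factor $\log N$ over the $q$-adic blocks, the route tops out at $(\log N)^{s+1}$ rather than $(\log N)^{s}(\log\log N)^{2+\varepsilon}$. Saving the factor $\log N/(\log\log N)^{2+\varepsilon}$ requires treating all digits of $N$ jointly, e.g.\ through an exact local-discrepancy formula as in Lemma~\ref{le6}, where each frequency vector $\bsk$ is counted once for the whole segment of length $N$ and its weight $G(N,w(\bsk))$ is bounded by $q^{\min(w,m)}$ rather than by $N$, combined with a Khintchine-type metrical bound on the valuations $w(\bsk)$; this is essentially how the argument in \cite{L95} proceeds, and it is not recovered by a per-block Erd\H{o}s--Tur\'an--Koksma bound plus Borel--Cantelli.
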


Recall that it follows from a result of Roth~\cite{roth} that there exists a quantity $c(s)>0$ such that for every sequence $\cS$ in $[0,1)^s$ we have 
\begin{equation}\label{bdroth}
 D_N^{\ast}(\cS(\bsf)) \ge c(s) (\log N)^{s/2} \ \ \ \mbox{ for infinitely many }\ \ N \in \NN.
\end{equation}
For a proof, see, for example, \cite[Chapter~2, Theorem~2.2]{kuinie}. Many people believe that the exponent $s/2$ of the logarithm in \eqref{bdroth} can be replaced by $s$ but until now there is no proof of this conjecture for $s\ge 2$. For $s=1$ we have 
\begin{equation}
D_N^{\ast}(\cS) \ge c \log N \ \ \ \mbox{ for infinitely many }\ \ N \in \NN
\end{equation}
with a constant $c>0$ which is independent of $N$. This has been shown by Schmidt~\cite{Schm72distrib}.

It is the object of this paper to show that the metrical upper bound from Theorem~\ref{th0} is best possible in the order of magnitude in $N$ (up to some $\log\log N$ term). We will prove:

\begin{theorem}\label{th1}
Let $s \in \NN$ and let $q$ be a prime number. For $\mu_s$-almost all $\bsf \in (\overline{\ZZ}_q((x^{-1})))^s$ the digital Kronecker-sequence $\cS(\bsf)$ has star discrepancy satisfying $$D_N^\ast(\cS(\bsf)) \ge c(q,s) (\log N)^s \log\log N \ \ \mbox{ for infinitely many } N \in \NN$$ with some $c(q,s)>0$ not depending on $N$.
\end{theorem}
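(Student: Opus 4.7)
The plan is to adapt Beck's classical metrical lower bound argument to the digital/non-archimedean setting, with Walsh analysis replacing Fourier analysis.

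First, I would establish a single-test Walsh lower bound on the star discrepancy: a Koksma--Hlawka type inequality for the $q$-adic Walsh function $\swal_\bsk$, whose Hardy--Krause-type variation is of order $\prod_{j=1}^s \max(1,k_j)$, yields
\[
D_N^\ast(\cS(\bsf)) \;\ge\; c(q,s)\,\frac{1}{\prod_{j=1}^s \max(1,k_j)}\,\left|\sum_{n=0}^{N-1}\swal_\bsk(\bsx_n)\right|
\]
for every $\bsk \in \NN_0^s\setminus\{\bszero\}$. The crucial \emph{digital} amplification is that whenever $N=q^m$ and the Laurent series $k_1(x)f_1(x)+\cdots+k_s(x)f_s(x)$ has its first $m$ coefficients all equal to zero---an event $R_{\bsk,m}$, which under $\mu_s$ has probability exactly $q^{-m}$---the inner Walsh sum is precisely $N$.

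Second, I would count the candidate resonances at a given scale $m$ and apply a Chung--Erd\H{o}s second-moment estimate. Setting $T_m:=c_1\,q^m/(m^s\log m)$ and $\N_m:=\{\bsk\in\NN_0^s\setminus\{\bszero\}:\ \prod_{j=1}^s \max(1,k_j)\le T_m\ \text{and}\ \deg k_j<m\ \text{for all}\ j\}$, a standard hyperbolic-cross count gives $|\N_m|\asymp T_m(\log T_m)^{s-1}\asymp q^m/(m\log m)$. The events $\{R_{\bsk,m}\}_{\bsk\in\N_m}$ encode pairwise $\ZZ_q$-linearly independent linear conditions on the first few Laurent coefficients of $\bsf$ and are therefore pairwise independent, so Chung--Erd\H{o}s yields
\[
\mu_s(\A_m)\;\gg\; \frac{1}{m\log m}, \qquad \A_m:=\bigcup_{\bsk\in\N_m} R_{\bsk,m}.
\]
On $\A_m$ the first step delivers $D_{q^m}^\ast(\cS(\bsf))\gg q^m/T_m \asymp (\log q^m)^s \log\log q^m$, the target bound at the single scale $N=q^m$.

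Finally, I would upgrade ``positive probability at each scale'' to ``occurs for infinitely many $m$'' by a Borel--Cantelli argument. Since $\sum_m \mu_s(\A_m)\gg \sum_m 1/(m\log m)=\infty$, the second Borel--Cantelli lemma gives the conclusion provided one establishes enough independence between the $\A_m$ across scales. This is the \emph{main obstacle} of the proof: the resonance conditions at different scales all involve overlapping windows of Laurent coefficients of $\bsf$, so the $\A_m$ are only \emph{quasi}-independent---precisely the non-archimedean counterpart of the near-independence of continued-fraction digits exploited in Beck's original argument. I would handle this either by passing to a sparse subsequence of scales $m_\ell$ whose relevant coefficient windows are disjoint (while retaining $\sum_\ell \mu_s(\A_{m_\ell})=\infty$), or by proving a Kochen--Stone-type estimate $\mu_s(\A_m\cap\A_{m'})\ll \mu_s(\A_m)\mu_s(\A_{m'})$ via another second-moment computation across pairs of scales.
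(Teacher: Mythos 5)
Your first step is sound and is a genuinely different (and leaner) route than the paper's: the Koksma--Hlawka-type duality bound $D_N^\ast\ge c(q,s)\,|\sum_{n<N}\prod_j\walb_{k_j}(x_{n,j})|/\prod_j\max(1,k_j)$ is correct (the Walsh function is constant on the elementary $q$-adic boxes and has mean zero), and the resonance identity is also correct: since the generating matrices are Hankel, $\prod_j\walb_{k_j}(x_{n,j})=\walb_n(\{k_1f_1+\cdots+k_sf_s\})$, which equals $1$ for all $n<q^m$ on $R_{\bsk,m}$, so the Walsh sum is exactly $N$. This bypasses the paper's machinery of expanding $D(\bsx,N)$ exactly in Walsh series (its Lemma~\ref{le6}), testing against one frequency $\bsk^\ast$, and then killing the side frequencies $\widetilde{k}_j+\beta_j(\widetilde{k}_j,u_j)$ via Lemma~\ref{le3} and the extra $\gcd$ conditions built into the set $P$ in \eqref{defP}. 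The first genuine gap is your independence claim: the events $\{R_{\bsk,m}\}_{\bsk\in\N_m}$ over the \emph{full} hyperbolic cross are not pairwise independent. If $\bsl=x\,\bsk$ (componentwise), then $\sum_j l_jf_j=x\sum_j k_jf_j$, and writing $g=\sum_jk_jf_j$ the joint event only requires the first $m+1$ digits of $g$ to vanish, so $\mu_s(R_{\bsk,m}\cap R_{\bsl,m})=q^{-(m+1)}\gg q^{-2m}$, and both tuples lie in $\N_m$. Independence holds only after restricting to primitive tuples ($\gcd(k_1,\ldots,k_s)=1$, excluding the degenerate tuple $(1,\ldots,1)$) --- exactly the hypotheses under which the paper's Lemma~\ref{le4} proves the product formula; for $s\ge2$ this costs only a constant factor in your count, but it must be built in.

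The second gap is the cross-scale step, which you rightly flag but do not actually close, and both proposed fixes fail as stated. Sparsification with ``disjoint coefficient windows'' is impossible: the conditions defining $R_{\bsk,m'}$ always involve the digits $1,\ldots,m'$ of the series $\sum_jk_jf_j$, hence the window at a larger scale contains every smaller one; moreover $\mu_s(\A_m)\asymp 1/(m\log m)$ diverges only doubly-logarithmically, so any genuine thinning (e.g.\ geometric) destroys $\sum_\ell\mu_s(\A_{m_\ell})=\infty$. The inequality $\mu_s(\A_m\cap\A_{m'})\ll\mu_s(\A_m)\mu_s(\A_{m'})$ is also false for neighbouring scales: since $\N_m\subseteq\N_{m'}$ and $R_{\bsk,m'}\subseteq R_{\bsk,m}$, one has $\A_m\cap\A_{m'}\supseteq\bigcup_{\bsk\in\N_m}R_{\bsk,m'}$, of measure $\gg q^{m-m'}/(m\log m)$, which for $m'=m+1$ dwarfs $1/(m^2(\log m)^2)$. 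What does work is the averaged Chung--Erd\H{o}s/Sprind\v{z}uk bound applied to the doubly indexed family $\{R_{\bsk,m}\}$ after the primitivity restriction: distinct primitive tuples give exactly independent events even across scales, and the diagonal terms ($\bsk=\bsl$, $m\neq m'$) sum to $O\bigl(\sum_m\mu_s(\A_m)\bigr)$, negligible against $\bigl(\sum_m\mu_s(\A_m)\bigr)^2$; this yields a full-measure limsup set, but it is a computation you still have to carry out, not the stated pairwise bound. Note that the paper sidesteps the two-level bookkeeping entirely by attaching to each primitive $\bsk$ a \emph{single} event of $\bsk$-dependent depth $F$ with $q^{F(r_1,\ldots,r_s)}=q^{r_1+\cdots+r_s}(r_1+\cdots+r_s)^s\log(r_1+\cdots+r_s)$ (the scale $m=\lfloor F\rfloor$, $N=q^{m-1}$ is then a function of $\bsk$), so that the whole family is pairwise independent and Lemma~\ref{le5} gives measure one in one stroke; adopting that indexing while keeping your duality first step would give a complete proof that genuinely shortcuts the paper's Lemmas~\ref{le6} and~\ref{le3}.
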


For the proof of Theorem~\ref{th1} we use an approach similar to the technique used by Beck \cite{beck} to give a metric lower bound for the discrepancy of Kronecker sequences. In the following section we will collect some auxiliary results. The proof of Theorem~\ref{th1} is then presented in Section~\ref{proofthm}.

\section{Auxiliary results}

For given $\bsf=(f_1,\ldots,f_s) \in (\overline{\ZZ}_q((x^{-1})))^s$ with $f_j=\frac{f_{j,1}}{x}+\frac{f_{j,2}}{x^2}+\frac{f_{j,3}}{x^3}+\cdots \in \overline{\ZZ}_q((x^{-1}))$ we define $\NN \times \NN$ matrices $C_1,\ldots,C_s$ over $\ZZ_q$ by $$C_j=\left(
\begin{array}{llll}
f_{j,1} & f_{j,2} & f_{j,3} & \ldots\\
f_{j,2} & f_{j,3} & f_{j,4} & \ldots\\
f_{j,3} & f_{j,4} & f_{j,5} & \ldots\\ 
\multicolumn{4}{c}\dotfill 
\end{array}\right).$$
Then the elements $\bsx_n$ of the digital Kronecker-sequence can be constructed with the following digital method: For $n \in \NN_0$ with $q$-adic expansion $n = n_0 + n_1 q + n_2 q^2+\cdots $ (this expansion is obviously finite) we set $$\vec{n} = (n_0, n_1, n_2, \ldots )^\top \in (\ZZ_q^{\mathbb{N}})^\top$$ and then we put  
\begin{equation*}
\vec{x}_{n,j} := C_j \vec{n} \quad \mbox{for } j = 1,\ldots, s
\end{equation*}
where all arithmetic operations are taken modulo $q$. Write $\vec{x}_{n,j}$ as $\vec{x}_{n,j} = (x_{n,j,1}, x_{n,j,2},\ldots)^\top$. Then the $n$th point $\boldsymbol{x}_n$ of the sequence $\cS(\bsf)$ is given by $\boldsymbol{x}_n = (x_{n,1}, \ldots, x_{n,s})$ where 
\begin{equation*}
x_{n,j} = x_{n,j,1} q^{-1} + x_{n,j,2} q^{-2} + \cdots.
\end{equation*}
It follows that digital Kronecker-sequences are just special examples of digital sequences as introduced by Niederreiter in \cite{nie87}, see also \cite{DP10,niesiam}. This way of describing the sequence $\cS(\bsf)$ is the reason why it is called a digital Kronecker-sequence. \\

We continue with some notational issues. As already mentioned we sometimes consider $j\in \NN_0$ as elements in $\ZZ_q[x]$ and vice versa. Similarly, $f \in \overline{\ZZ}_q((x^{-1}))$ is sometimes considered as element in $[0,1)$ and vice versa, just by substituting $q$ for $x$. It should always be clear from the context what is meant. However, multiplication and addition of polynomials and Laurent series are always performed in $\ZZ_q((x^{-1}))$.\\

An important tool in our analysis are $q$-adic Walsh functions which we introduce now:

\begin{definition}[$q$-adic Walsh functions]\rm
Let $q$ be a prime number and let $\omega_q:=\exp(2 \pi \icomp/q)$ be the $q$th root of unity. For $j \in \NN_0$ with $q$-adic expansion $j=j_0+j_1 q+j_2 q^2+\cdots$ (this expansion is obviously finite) the $j$th $q$-adic {\it Walsh function} $\walb_j:\RR \rightarrow \CC$, periodic with period one, is defined as $$\walb_j(x)=\omega_q^{j_0 \xi_1+j_1 \xi_2+j_2 \xi_3+\cdots}$$ whenever $x \in [0,1)$ has $q$-adic expansion of the form $x=\xi_1 q^{-1}+\xi_2 q^{-2}+\xi_3 q^{-3}+\cdots$ (unique in the sense that infinitely many of the digits $\xi_i$ must be different from $q-1$).
\end{definition}

We collect some properties of Walsh functions. More informations can be found in \cite[Appendix~A]{DP10}.

\begin{lemma}\label{le1}
For $j,k,l \in \ZZ_q[x]$ and $f,f_1,f_2 \in \ZZ_q((x^{-1}))$ we have 
\begin{enumerate}
\item $\walb_j(k f_1+l f_2)= \walb_{j k}(f_1) \walb_{jl}(f_2)$, where $k f_1+l f_2$ is evaluated in $\ZZ_q((x^{-1}))$ and $jk$ and $jl$, respectively, in $\ZZ_q[x]$; 
\item $\walb_j(f) \walb_k(f)=\walb_{j+k}(f)$;
\item \label{ass2} $$\sum_{x=0}^{q^m-1} \walb_k(x/q^m) \overline{\walb_l(x/q^m)}=\left\{
\begin{array}{ll}
1 &  \mbox{ if } x^m | k-l, \\
0 & \mbox{ otherwise},
\end{array}\right.$$ (orthonormality of Walsh functions); and
\item $\int_0^1 \walb_k(x) \rd x=0$ whenever $k \not=0$.
\end{enumerate}
\end{lemma}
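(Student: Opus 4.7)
My plan is to unify all four statements by first rewriting the definition of $\walb_j(f)$ in a form that plays well with the ring structure of $\ZZ_q((x^{-1}))$. Specifically, I would observe that for $j=j_0+j_1x+\cdots \in \ZZ_q[x]$ and $f=\sum_{k\ge 1}\xi_k x^{-k}\in \overline{\ZZ}_q((x^{-1}))$, the product $jf$ computed in $\ZZ_q((x^{-1}))$ has coefficient of $x^{-1}$ equal to $j_0\xi_1+j_1\xi_2+j_2\xi_3+\cdots$, exactly the exponent appearing in the definition of $\walb_j(f)$. Denoting the $x^{-1}$-coefficient of a Laurent series $g$ by $\langle g\rangle$, this packages $\walb_j(f)=\omega_q^{\langle jf\rangle}$ into a single identity valid for all $j$ and $f$.

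Assertions (i) and (ii) then follow directly from the ring laws applied to $\langle\,\cdot\,\rangle$. For (i), $j(kf_1+lf_2)=(jk)f_1+(jl)f_2$ in $\ZZ_q((x^{-1}))$, so the coefficient of $x^{-1}$ splits as $\langle (jk)f_1\rangle+\langle (jl)f_2\rangle$ in $\ZZ_q$, which exponentiates to the product $\walb_{jk}(f_1)\walb_{jl}(f_2)$. For (ii), $\langle jf\rangle+\langle kf\rangle=\langle (j+k)f\rangle$ gives $\walb_j(f)\walb_k(f)=\walb_{j+k}(f)$ at once.

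For (iii), I would first combine (ii) with $\overline{\omega_q}=\omega_q^{-1}$ to reduce to computing $T(h):=\sum_{x=0}^{q^m-1}\walb_h(x/q^m)$ with $h:=k-l\in\ZZ_q[x]$. Parametrising $x=\xi_0+\xi_1 q+\cdots+\xi_{m-1}q^{m-1}$, the digits of $x/q^m\in[0,1)$ are $(\xi_{m-1},\xi_{m-2},\ldots,\xi_0,0,0,\ldots)$, so the exponent in $\walb_h(x/q^m)$ collapses to $\sum_{i=0}^{m-1}h_i\xi_{m-1-i}$ and the sum factorises into $m$ independent inner sums over $\ZZ_q$, each equal to $q$ or $0$ according as the corresponding coefficient $h_i$ vanishes in $\ZZ_q$ or not. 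The product is nonzero precisely when $h_0=\cdots=h_{m-1}=0$, i.e.\ when $x^m\mid h$ in $\ZZ_q[x]$ (the stated right-hand side carries an implicit $q^{-m}$ normalisation). For (iv), I would use that for $x$ uniformly distributed in $[0,1)$ the digits $\xi_1,\xi_2,\ldots$ are independent and uniform on $\ZZ_q$ (outside a null set), so Fubini gives
\[
\int_0^1\walb_k(x)\,\rd x=\prod_{i\ge 0}\left(\frac{1}{q}\sum_{\xi\in\ZZ_q}\omega_q^{k_i\xi}\right);
\]
if $k\ne 0$, some $k_i\ne 0$ in $\ZZ_q$ and the corresponding factor vanishes by the standard character-sum identity.

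The main obstacle I anticipate is not any single calculation, but rather keeping careful track of the two parallel arithmetics in play: addition in $\overline{\ZZ}_q((x^{-1}))$ is coefficientwise modulo $q$ and carry-free, whereas addition in $[0,1)\subset\RR$ may carry. The unifying formula $\walb_j(f)=\omega_q^{\langle jf\rangle}$ is designed precisely to let all algebra be carried out at the Laurent-series level, where distributivity is transparent, so that the digit/real correspondence is invoked only at the input and output. This keeps (i)–(ii) a single line of ring arithmetic and reduces (iii)–(iv) to textbook character-sum orthogonality.
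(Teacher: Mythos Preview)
Your proof is correct and follows the paper's own line, which simply asserts that these are standard properties deducible from the definition and refers to \cite[Appendix~A]{DP10}; your residue formula $\walb_j(f)=\omega_q^{\langle jf\rangle}$ is exactly how one makes that deduction explicit, and (i)--(iv) then follow as you describe. Your parenthetical remark on (iii) is also apt: the unnormalised sum equals $q^m$ rather than $1$ when $x^m\mid k-l$, so the stated right-hand side should be read with an implicit factor $q^{-m}$ (or the $1$ replaced by $q^m$).
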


\begin{proof}
These are standard properties of Walsh functions and are easily deduced from their definition. Alternatively we refer to \cite[Appendix~A]{DP10}. 
\end{proof}

We need some notation. For $m \in \NN_0$ let 
\begin{eqnarray*}
\QQ(q^m) & = & \{x=r q^{-m} \in [0,1) \, : \, r=0,\ldots,q^m -1\},\\
\QQ^s(q^m) & = & \{\bsx=(x_1,\ldots,x_s) \in [0,1)^s \, : x_j \in \QQ(q^m) \ \mbox{ for }\  j=1,\ldots,s\}.
\end{eqnarray*}

\begin{lemma}\label{le6}
Let $\cS(\bsf)=(\bsx_n)_{n \ge 0}$ be a digital Kronecker-sequence generated by an $s$-tuple $\bsf=(f_1,\ldots,f_s) \in (\overline{\ZZ}_q((x^{-1})))^s$.  Let $N \in \NN$ with base $q$ expansion $N=N_{m-1}q^{m-1}+\cdots +N_1 q + N_0$ and let $\bsx=(x_1,\ldots,x_s) \in \QQ^s(q^m)$. Then we have
\begin{eqnarray*}
D(\bsx,N) =  \sum_{k_1,\ldots, k_s=0\atop (k_1,\ldots,k_s)\not=(0,\ldots ,0)}^{q^m-1} \left(\prod_{j=1}^s J_{k_j}(x_j)\right) G(N,w(k_1,\ldots,k_s)),  
\end{eqnarray*}
where for $k=\kappa q^{a-1}+k'$ with $a \in \NN$, $1 \le
\kappa <q$ and $0 \le k' < q^{a-1}$ we have 
\begin{eqnarray}\label{valJk}
J_k(x) & = & \frac{1}{q^a} \Bigg(\frac{1}{1-\omega_q^{-\kappa}}\ 
\overline{\walb_{k'}(x)} + \left(\frac{1}{2} +
\frac{1}{\omega_q^{-\kappa}-1}\right) \ \overline{\walb_k(x)} \\ &&
\qquad + \sum_{c=1}^{m-a} \sum_{l = 1}^{q-1} \frac{1}{q^c
(\omega_q^l -1)} \ \overline{\walb_{l q^{a+c-1}+k}(x)}- \frac{1}{2 q^{m-a}} \ \overline{\walb_k(x)} \Bigg)
\end{eqnarray}
and for $k = 0$ we have
\begin{equation}\label{valJ0}
J_0(x) = \frac{1}{2} + \sum_{c=1}^m \sum_{l=1}^{q-1}
\frac{1}{q^c (\omega_q^l-1)}\ \overline{\walb_{l q^{c-1}}(x)}-\frac{1}{2 q^m},\nonumber
\end{equation} 
and where $$G(N,w(k_1,\ldots,k_s))=\left[\omega_q^{b_{w+1} N_{w+1}+\cdots +b_{m-1} N_{m-1}} q^w \left(\frac{\omega_q^{b_w N_w}-1}{\omega_q^{b_w}-1} + \omega_q^{b_w N_w} \left\{\frac{N}{q^w}\right\}\right)\right]$$ with $$w=w(k_1,\ldots,k_s) =-\nu\left(\left\{\sum_{j=1}^s k_j f_j\right\} \right).$$ However, if $w \ge m$, then we put $G(N,w)=N$.
\end{lemma}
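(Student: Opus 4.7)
The approach is classical for digital sequences: expand the indicator $\chi_{[\bszero,\bsx)}$ in a finite Walsh series, use the digital structure of $\cS(\bsf)$ together with Lemma~\ref{le1}(1) to turn the product of Walsh characters $\prod_j\walb_{k_j}(x_{n,j})$ into a single character $\walb_n(h)$ with $h=\{\sum_j k_j f_j\}$, and finally evaluate the resulting one-dimensional character sum $\sum_{n=0}^{N-1}\walb_n(h)$ explicitly.

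For the Walsh expansion of the indicator, observe first that for $x\in\QQ(q^m)$ the function $\chi_{[0,x)}(y)$ depends only on the first $m$ $q$-adic digits of $y$, hence it belongs to the span of $\{\walb_k:0\le k<q^m\}$ and admits a finite expansion
$$\chi_{[0,x)}(y)=\sum_{k=0}^{q^m-1}J_k(x)\,\walb_k(y), \qquad J_k(x)=\int_0^x\overline{\walb_k(y)}\rd y,$$
valid for every $y\in[0,1)$ with unique $q$-adic expansion. Writing $k=\kappa q^{a-1}+k'$ and integrating digit by digit, each group of $q^{-a}$-length subintervals of $[0,x)$ contributes a geometric sum that, via the elementary identity $\sum_{t=0}^{q-1}t\,\omega_q^{-lt}=q/(\omega_q^{-l}-1)$ for $l\not\equiv 0\pmod{q}$, can be re-expressed as a linear combination of Walsh functions in $x$; collecting the contributions from levels $a,a+1,\ldots,m$ together with the $q^{-m}$ boundary correction produces \eqref{valJk}, while the analogous calculation for $k=0$ gives the Walsh representation \eqref{valJ0} of the identity function $J_0(x)=x$ (for $x\in\QQ(q^m)$).

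Multiplying the expansion over the $s$ coordinates and summing over $0\le n<N$ yields
$$D(\bsx,N)+Nx_1\cdots x_s=\sum_{k_1,\ldots,k_s=0}^{q^m-1}\Bigl(\prod_{j=1}^s J_{k_j}(x_j)\Bigr)\sum_{n=0}^{N-1}\prod_{j=1}^s \walb_{k_j}(x_{n,j}).$$
Since $x_{n,j}=\{nf_j\}$ and Walsh functions ignore the polynomial part of a Laurent series, iterated application of Lemma~\ref{le1}(1) collapses the inner product to $\walb_n(h)$ with $h=\{\sum_j k_jf_j\}$. The term $\bsk=\bszero$ contributes $N\prod_j J_0(x_j)=Nx_1\cdots x_s$, which cancels the additive summand on the left, so only $\bsk\ne\bszero$ remains. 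For such $\bsk$, set $w=-\nu(h)$; the exponent $\sum_i n_ih_{i+1}$ in $\walb_n(h)$ is supported on $i\ge w-1$, so splitting $0\le n<N$ according to the base-$q$ digits of $N=N_{m-1}q^{m-1}+\cdots+N_0$ leaves the lower $w-1$ digits of $n$ unconstrained (contributing a factor $q^{w-1}$); the digit at level $w-1$ mixes with $\{N/q^w\}$ through $\sum_{t=0}^{M-1}\omega_q^{bt}=(\omega_q^{bM}-1)/(\omega_q^b-1)$ to produce the bracketed factor of $G(N,w)$, while the remaining fixed higher digits of $n$ contribute the outer factor $\omega_q^{b_{w+1}N_{w+1}+\cdots+b_{m-1}N_{m-1}}$. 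In the degenerate regime $w\ge m$ one has $h_1=\cdots=h_m=0$, whence $\walb_n(h)=1$ for every $0\le n<q^m$, justifying the convention $G(N,w)=N$.

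I expect the main obstacle to be the digit-level derivation of the closed form \eqref{valJk}: the correction term $-\tfrac{1}{2q^{m-a}}\overline{\walb_k(x)}$ and the truncation of the inner sum at $c=m-a$ both reflect the fact that $x\in\QQ(q^m)$ has only $m$ nontrivial digits, and those boundary effects must be tracked carefully when rewriting $\int_0^x\overline{\walb_k(y)}\rd y$ as a Walsh polynomial in $x$. Once \eqref{valJk}--\eqref{valJ0} are in hand, the remainder of the argument is a routine reorganization of a standard digital-sequence character-sum computation.
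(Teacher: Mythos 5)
Your proposal is correct in approach, but it takes a different route from the paper: the paper's entire proof is a one-line reduction, namely that $\cS(\bsf)$ is a digital sequence generated by the Hankel matrices $C_1,\ldots,C_s$, so the formula is exactly Lemma~7 of \cite{LP13}, the only Kronecker-specific point being that for these matrices the quantity $w(k_1,\ldots,k_s)$ of that lemma becomes $-\nu\left(\left\{\sum_j k_j f_j\right\}\right)$. You instead re-derive the statement from scratch: Walsh-expand the indicator of $[\bszero,\bsx)$ with coefficients $J_k(x)=\int_0^x\overline{\walb_k(y)}\rd y$ (exact and finite because $x\in\QQ(q^m)$, with the tail of the infinite Walsh series collapsing into the $-\tfrac{1}{2q^{m-a}}$, resp.\ $-\tfrac{1}{2q^{m}}$, corrections), use Lemma~\ref{le1}(1) --- which is the Hankel symmetry in disguise --- to collapse $\prod_j\walb_{k_j}(\{n f_j\})$ into $\walb_n\left(\left\{\sum_j k_jf_j\right\}\right)$, and evaluate the resulting character sum blockwise to obtain $G(N,w)$; this is essentially the proof of the cited LP13 lemma specialized to the present setting, so it is self-contained but redoes the routine-yet-lengthy computations of \eqref{valJk}, \eqref{valJ0} and $G$ that the paper outsources. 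One point to watch if you write it out: with the digit pairing $n_{i-1}\leftrightarrow h_i$ coming from the definition of $\walb$, the free digits in $\sum_{n<N}\walb_n(h)$ are $n_0,\ldots,n_{w-2}$, which yields a factor $q^{w-1}$ and a geometric sum in $N_{w-1}$, while the displayed $G(N,w)$ shows $q^w$, $N_w$ and $\{N/q^w\}$; since the $b_i$ are never defined in this paper (they are inherited from \cite{LP13}), you must fix the indexing convention (in effect $b_i=h_{i+1}$, or a shifted $w$) so that your blockwise evaluation literally matches the stated $G$. This bookkeeping does not affect the structure of your argument, nor the facts actually used later ($|G(N,w)|\le qN$, $|G|\ll q^w$, and $G(N,w)=N$ when $w\ge m$), but it is the one place where your sketch and the quoted formula must be reconciled carefully.
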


\begin{proof}
This follows directly from \cite[Lemma~7]{LP13} and the construction of $\cS(\bsf)$ in terms of matrices $C_1,\ldots,C_s$.
\end{proof}

\begin{lemma}\label{le2}
For $m \in \NN$ and $k_1,\ldots,k_s \in \ZZ_q[x]$, not all of them 0, let $$M_m(k_1,\ldots,k_s):=\{(f_1,\ldots,f_s) \in (\overline{\ZZ}_q((x^{-1}))^s  \ : \ \nu(\{k_1f_1+\cdots +k_s f_s\}) \le -m\}.$$ Then we have $$\mu_s( M_m(k_1,\ldots,k_s) ) = \frac{1}{q^{m-1}}.$$
\end{lemma}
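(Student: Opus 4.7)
The plan is to recast the defining condition of $M_m(k_1,\ldots,k_s)$ as a triangular $\ZZ_q$-linear system on the digits of one of the $f_j$, condition on the remaining $f_l$'s by Fubini, and conclude via the i.i.d.\ uniform structure of the Haar measure on digits.

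First I would observe that for $g=\sum_{i\ge 1}a_i x^{-i}\in\overline{\ZZ}_q((x^{-1}))$, the condition $\nu(g)\le -m$ is equivalent to $a_1=a_2=\cdots=a_{m-1}=0$, so membership in $M_m$ is defined by $m-1$ homogeneous $\ZZ_q$-linear constraints on the digits of the $f_j$. Since $(k_1,\ldots,k_s)\ne 0$, I would relabel so that $k_1\ne 0$ and write $k_1=k_{1,0}+k_{1,1}x+\cdots+k_{1,d}x^d$ with $k_{1,d}\ne 0$. A direct expansion of $f_l=\sum_{i\ge 1}f_{l,i}x^{-i}$ shows that the coefficient of $x^{-j}$ in $k_l f_l$ equals $\sum_{p=0}^{d_l}k_{l,p}f_{l,j+p}$, where $d_l=\deg k_l$.

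Next I would condition on $(f_2,\ldots,f_s)$. The $m-1$ vanishing conditions then become
\[ \sum_{p=0}^{d}k_{1,p}\,f_{1,j+p}\;=\;R_j\qquad (j=1,\ldots,m-1), \]
where each $R_j\in\ZZ_q$ depends only on the conditioning. The crucial structural point is that this system is \emph{triangular}: the $j$-th equation is the first (as $j$ grows from $1$) in which the digit $f_{1,j+d}$ appears, and it appears with the invertible coefficient $k_{1,d}$. Treating $f_{1,1},\ldots,f_{1,d}$, together with all $f_{1,i}$ for $i\ge m+d$, as free parameters, the system successively and uniquely determines the $m-1$ digits $f_{1,1+d},f_{1,2+d},\ldots,f_{1,m-1+d}$.

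Under $\mu$, the digits $(f_{1,i})_{i\ge 1}$ are i.i.d.\ uniform on $\ZZ_q$, so the probability that these $m-1$ prescribed digits take their unique required values equals $q^{-(m-1)}$, independently of $(f_{1,1},\ldots,f_{1,d})$ and of $(f_2,\ldots,f_s)$. Integrating this constant conditional probability by Fubini yields $\mu_s(M_m(k_1,\ldots,k_s))=q^{-(m-1)}=1/q^{m-1}$, as claimed. There is no substantial obstacle in this argument; the only care required is in verifying the triangular shape of the linear system and checking the boundary case $m=1$, in which the statement becomes $\mu_s(M_1)=1$, i.e.\ the vacuous constraint.
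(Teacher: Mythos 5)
Your proof is correct, but it takes a genuinely different route from the paper. You argue directly on the digit level: the condition $\nu(\{k_1f_1+\cdots+k_sf_s\})\le -m$ is the vanishing of the coefficients of $x^{-1},\ldots,x^{-(m-1)}$, and after conditioning on $f_2,\ldots,f_s$ (and on the first $d=\deg k_1$ digits of $f_1$) these $m-1$ linear equations over $\ZZ_q$ are triangular with invertible pivot $k_{1,d}$, so they pin down exactly $m-1$ digits of $f_1$; since $\mu$ makes the digits i.i.d.\ uniform (the cylinder-set description of $\mu$ given in the paper), the conditional probability is $q^{-(m-1)}$ and Fubini finishes the proof. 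The paper instead expands the indicator function of $[0,q^{-(m-1)})$ into its finite Walsh series with constant term $q^{-(m-1)}$ and kills all nonconstant terms using $\int_0^1\walb_{ik_j}(f_j)\,\rd f_j=0$ for $ik_j\neq 0$ (Lemma~\ref{le1}); this is the Fourier--Walsh incarnation of the same underlying fact, since the Walsh characters are exactly the characters of the digit group. Your argument is more elementary and self-contained (it avoids the Walsh series representation of the indicator, cited from \cite[Lemma~3.9]{DP10}), and it makes the exact ``$m-1$ digits are forced'' structure transparent, including the boundary case $m=1$. The paper's character-sum method, on the other hand, is chosen because it scales painlessly to the correlation estimates needed later: in Lemmas~\ref{le3} and~\ref{le4} the measure of intersections such as $M_1\cap M_2$ is computed by the same orthogonality trick, where the only issue is whether a combined frequency $ik_l+j(k_l+\beta_l(k_l))$ can vanish; reproducing those computations with your direct method would require a rank analysis of the joint linear system, which amounts to the same algebra but is less convenient to organize.
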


\begin{proof}
Let $\chi$ be the characteristic function of the interval $[0,q^{-(m-1)})$. Then $\chi$ has a finite Walsh series representation in base $q$ of the form $$\chi(x)=\sum_{i=0}^{q^{m-1} -1} a_i \walb_i(x)$$ with $a_0=q^{-(m-1)}$, see \cite[Lemma~3.9]{DP10}. Now
\begin{eqnarray*}
\mu_s( M_m(k_1,\ldots,k_s) ) & = & \int_{[0,1]^s} \chi(\{k_1f_1+\cdots +k_s f_s\}) \rd f_1 \ldots \rd f_s\\
& = & a_0 + \sum_{i=1}^{q^{m-1}-1} a_i  \int_{[0,1]^s} \walb_i(k_1f_1+\cdots +k_s f_s) \rd f_1 \ldots \rd f_s\\ 
& = & \frac{1}{q^{m-1}} + \sum_{i=1}^{q^{m-1}-1} a_i \prod_{j=1}^s  \int_0^1 \walb_{i k_j}(f_j) \rd f_j\\
& = & \frac{1}{q^{m-1}},
\end{eqnarray*}
since at least one of the $k_j$ is different from zero and for such a $k_j$ we have $\int_0^1 \walb_{i k_j}(f_j) \rd f_j=0$ according to Lemma~\ref{le1}.
\end{proof}

\begin{lemma}\label{le3}
Let $\overline{P} \subseteq (\ZZ_q[x]\setminus \{0\})^s$. For $(k_1,\ldots,k_s) \in \overline{P}$ with $\deg(k_j)=r_j$ for $j=1,\ldots,s$ let $\beta_1(k_1),\ldots,\beta_s(k_s) \in \ZZ_q[x]$ be polynomials which satisfy $\beta_j(k_j)=0$ or $\gcd(\beta_j(k_j),k_j)=1$ for all $j=1,\ldots ,s$, but not all of them equal to zero. Let 
\begin{eqnarray*}
\widetilde{M} & := & \{ (f_1,\ldots,f_s)\in (\overline{\ZZ}_q((x^{-1})))^s \ : \ \nu(\{k_1f_1+\cdots +k_s f_s\}) \le  -(r_1+\cdots +r_s), \\
& & \hspace{0.5cm} \nu(\{(k_1+\beta_1(k_1))f_1+\cdots +(k_s+\beta_s(k_s))f_s\}) \le - \lfloor (r_1+\cdots +r_s)/2 \rfloor\\
& & \hspace{0.5cm} \mbox{for infinitely many } \ (k_1,\ldots,k_s) \in \overline{P}\ \mbox{ with }\ \gcd(k_1,\ldots,k_s)=1\}.
\end{eqnarray*}
Then we have $$\mu_s(\widetilde{M})=0.$$
\end{lemma}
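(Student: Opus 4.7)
The approach is Borel--Cantelli. For each $\bsk=(k_1,\ldots,k_s)\in \overline{P}$ with $\gcd(k_1,\ldots,k_s)=1$, let $E(\bsk)$ denote the set of $\bsf$ simultaneously satisfying both valuation conditions in the definition of $\widetilde{M}$, so that $\widetilde{M}=\limsup_{\bsk} E(\bsk)$. It will suffice to prove $\sum_{\bsk} \mu_s(E(\bsk)) < \infty$.

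First I would decouple the two linear forms. Setting $L_1=\sum_j k_j f_j$ and $L'=\sum_j \beta_j f_j$, so that $L_2:=\sum_j (k_j+\beta_j)f_j = L_1 + L'$, the $\ZZ_q((x^{-1}))$-linearity of the fractional part gives $\{L_2\}=\{L_1\}+\{L'\}$, and the non-archimedean property of $\nu$ then shows
\[
E(\bsk) = \bigl\{\bsf : \nu(\{L_1\})\le -R,\ \nu(\{L'\})\le -R'\bigr\},
\]
where $R=r_1+\cdots+r_s$ and $R'=\lfloor R/2 \rfloor$.

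To estimate $\mu_s(E(\bsk))$ I would mimic the Walsh-expansion proof of Lemma~\ref{le2} in two coordinates. Express the indicator of $E(\bsk)$ as the product $\chi_{[0,q^{-(R-1)})}(\{L_1\})\,\chi_{[0,q^{-(R'-1)})}(\{L'\})$ and expand each factor as a finite $q$-adic Walsh series in which every nonzero coefficient equals $q^{-(R-1)}$, resp.\ $q^{-(R'-1)}$. Multiplying out and using items (1), (2), (4) of Lemma~\ref{le1} then yields
\[
\mu_s(E(\bsk)) = q^{-R-R'+2}\,|S(\bsk)|,\qquad S(\bsk) := \bigl\{(i,j): 0\le i<q^{R-1},\ 0\le j<q^{R'-1},\ i k_m + j \beta_m = 0 \text{ in } \ZZ_q[x]\ \forall m\bigr\}.
\]

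The crux is to show that $|S(\bsk)|=1$ for all but finitely many admissible $\bsk$, which amounts to showing that the $s\times 2$ matrix with rows $(k_m,\beta_m)$ has rank $2$ over $\ZZ_q(x)$. Rank $<2$ means every minor $k_m\beta_{m'}-k_{m'}\beta_m$ vanishes. If some $\beta_m=0$ but not all, one exhibits a nonzero minor (indices $m_1$ with $\beta_{m_1}=0$ and $m_2$ with $\beta_{m_2}\ne 0$, together with $k_{m_1}\ne 0$), contradicting rank $<2$. If all $\beta_m\ne 0$, the hypothesis $\gcd(\beta_m,k_m)=1$ forces $(k_m,\beta_m)=c_m(a,b)$ for a fixed coprime pair $(a,b)$ and units $c_m\in \ZZ_q^*$; then $\gcd(k_1,\ldots,k_s)=1$ forces $a\in\ZZ_q^*$, so all $k_m$ are nonzero constants and there are at most $(q-1)^s$ such $\bsk$. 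Discarding these finitely many exceptional tuples (they affect $\limsup$ only by a null set), the remaining $\bsk$ satisfy $\mu_s(E(\bsk))\le q^{5/2}\, q^{-3R/2}$. Since the number of $\bsk$ with $r_1+\cdots+r_s=R$ is at most $(q-1)^s q^R \binom{R+s-1}{s-1}$,
\[
\sum_{\bsk} \mu_s(E(\bsk)) \ll_{q,s} \sum_{R\ge 0} R^{s-1}\, q^{-R/2} < \infty,
\]
and Borel--Cantelli yields $\mu_s(\widetilde{M})=0$. The main technical obstacle is the rank-case analysis for the coefficient matrix; the Walsh computation is essentially a two-form generalisation of Lemma~\ref{le2}.
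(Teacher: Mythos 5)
Your proof is correct and follows essentially the same route as the paper: both compute $\mu_s$ of each two-condition event exactly via finite Walsh expansions of the two interval indicators, reduce non-vanishing of the resulting integrals to a linear resonance condition in $(i,j)$ that the coprimality hypotheses rule out for all but finitely many tuples, and conclude by summing (the convergence half of Borel--Cantelli). Your only deviations are cosmetic: you decouple the second linear form as $L_1+L'$ using the ultrametric, so your resonance condition reads $ik_m+j\beta_m=0$ instead of the paper's $ik_m+j(k_m+\beta_m)=0$, and you dispose of the degenerate tuples (all $k_m$ constant) by explicitly discarding a finite set, where the paper instead works under the assumption $(k_1,\ldots,k_s)\neq(1,\ldots,1)$.
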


\begin{proof}
For given $k_1,\ldots,k_s$ with $\gcd(k_1,\ldots,k_s)=1$ and not all $k_j=1$ let 
$$M_1(k_1,\ldots,k_s) := \{(f_1,\ldots,f_s)\, : \, \nu(\{k_1f_1+\cdots +k_s f_s\}) \le - (r_1+\cdots +r_s)\}$$ and 
\begin{eqnarray*}
M_2(k_1,\ldots,k_s) & := & \bigg\{(f_1,\ldots,f_s)\, : \\
& &  \, \nu(\{(k_1+\beta_1(k_1))f_1+\cdots +(k_s+\beta_s(k_s))f_s\}) \le - \left\lfloor \frac{r_1+\cdots+r_s}{2}\right\rfloor \bigg\}.
\end{eqnarray*}  
Let $m=r_1+\cdots+r_s$, let $\chi_1$ be the characteristic function of the interval $[0,q^{-(m-1)})$ and let $\chi_2$ be the characteristic function of the interval $[0,q^{\lfloor m/2 \rfloor-1})$. Then we have finite Walsh series representation of $\chi_1$ and $\chi_2$ in base $q$ given by
$$\chi_1=\sum_{i=0}^{q^{m-1}-1} a_i^{(1)} \walb_i \ \ \mbox{ and }\ \ \chi_2=\sum_{i=0}^{q^{\lfloor m/2 \rfloor -1} -1} a_i^{(2)} \walb_i$$ with $$a_0^{(1)}=\frac{1}{q^{m-1}} \ \ \ \mbox{ and }\ \ \ a_0^{(2)}=\frac{1}{q^{\left\lfloor m/2\right\rfloor -1}}.$$ Now we have
\begin{eqnarray*}
\lefteqn{\mu_s( M_1(k_1,\ldots,k_s) \cap M_2(k_1,\ldots,k_s) )}\\
& = & \int_{[0,1]^s} \chi_1(\{k_1f_1+\cdots +k_s f_s\}) \chi_2(\{(k_1+\beta_1(k_1))f_1+\cdots +(k_s+\beta_s(k_s))f_s\}) \rd f_1 \ldots \rd f_s\\
& = & a_0^{(1)} a_0^{(2)} + \sum_{i,j \atop (i,j)\not=(0,0)} a_i^{(1)} a_j^{(2)} \int_{[0,1]^s} \walb_i(k_1f_1+\cdots +k_s f_s)\\
& & \hspace{4cm} \times \walb_j((k_1+\beta_1(k_1))f_1+\cdots +(k_s+\beta_s(k_s))f_s) \rd f_1 \ldots \rd f_s.
\end{eqnarray*}
The integral in the last sum equals $$\prod_{l=1}^s \int_0^1 \walb_{ik_l+j(k_l+\beta_l(k_l))}(f_l) \rd f_l$$ and this is zero unless we have 
\begin{equation}\label{glstern1}
ik_l+j(k_l+\beta_l(k_l)) = 0 \ \ \ \ \forall\ l=1,\ldots,s. 
\end{equation}
This certainly cannot hold if $i=0$ or $j=0$. Let $i,j \not=0$. If $\beta_{\overline{l}}(k_{\overline{l}})=0$ for some $\overline{l}$ and if \eqref{glstern1} holds, then $i k_{\overline{l}}+j k_{\overline{l}}=0$ and hence $i+j=0$. Therefore we have $j \beta_l(k_l)=0$ for all $l=1,\ldots ,s$ and hence $\beta_l(k_l)=0$ for all $l=1,\ldots,s$ what is a contradiction. This means: If \eqref{glstern1} holds, then $i,j \not=0$ and $\beta_l(k_l)\not=0$ for all $l=1,\ldots,s$. Hence for all $l=1,\ldots,s$ we have $\gcd(k_l,k_l+\beta_l(k_l))=1.$

Now, if \eqref{glstern1} holds, for any $l,l'$ with $l \not= l'$ we have $$i k_l+j(k_l+\beta_l(k_l))=0 \ \ \mbox{ and }\ \ i k_{l'}+j(k_{l'}+\beta_{l'}(k_{l'}))=0,$$ hence
\begin{eqnarray*}
0 & = & i k_l k_{l'}+j(k_{l'}+j(k_l+\beta_l(k_l)) k_{l'}\\
& = & -k_l j (k_{l'}+\beta_{l'}(k_{l'}))+j(k_l+\beta_l(k_l)) k_{l'}\\
& = & j(\beta_l(k_l) k_{l'}-\beta_{l'}(k_{l'}) k_l), 
\end{eqnarray*}
and therefore $$\beta_{l'}(k_{l'}) k_l=\beta_l(k_l) k_{l'}.$$ Since $\gcd(k_l,\beta_l)=1$, we conclude that $k_l | k_{l'}$ for all $l'$. Since $\gcd(k_1,\ldots,k_s)=1$ it follows that $k_l=1$ and, since $l$ was arbitrary, $(k_1,\ldots,k_s)=(1,\ldots,1)$, a contradiction to the assumptions. So $$\mu_s( M_1(k_1,\ldots,k_s) \cap M_2(k_1,\ldots,k_s) )=a_0^{(1)} a_0^{(2)}= \frac{1}{q^{m+ \lfloor m/2 \rfloor -2}}$$ and $$\mu_s(\widetilde{M}) \le \lim_{R \rightarrow \infty} \sum_{r_1,\ldots,r_s \atop r_1+\cdots+r_s \ge R} \sum_{k_1,\ldots,k_s \atop \deg(k_i)=r_i} \frac{1}{q^{r_1+\cdots+r_s+ \lfloor (r_1+\cdots+r_s)/2 \rfloor -2}}=0.$$
\end{proof}

\begin{lemma}\label{le5}
Let $(\Omega, \mathcal{A},\mu)$ be a measure space and let $(A_n)_{n \ge 1}$ be a sequence of sets $A_n \in \mathcal{A}$ such that $$\sum_{n=1}^\infty \mu(A_n)=\infty.$$ Then the set $A$ of points falling in infinitely many sets $A_n$ is of measure $$\mu(A) \ge \limsup_{Q \rightarrow \infty} \frac{\left(\sum_{n=1}^Q \mu(A_n)\right)^2}{\sum_{n,m=1}^Q \mu(A_n \cap A_m)}.$$ 
\end{lemma}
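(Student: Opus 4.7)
The statement is a quantitative divergence form of the Borel--Cantelli lemma (sometimes called the Chung--Erd\H{o}s or Kochen--Stone inequality). My plan is to split the proof into two ingredients: (i) a Cauchy--Schwarz lower bound for the measure of the \emph{finite} union $\bigcup_{n=1}^Q A_n$ in terms of the first two moments of the counting function, and (ii) a passage from ``at least one'' to ``infinitely many'' by shifting the starting index.

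For step (i), I set $f_Q(\omega)=\sum_{n=1}^Q \mathbf{1}_{A_n}(\omega)$ and observe that $\{f_Q>0\}=\bigcup_{n=1}^Q A_n$. The Cauchy--Schwarz inequality gives
$$\left(\int f_Q\rd\mu\right)^{\!2} = \left(\int f_Q\,\mathbf{1}_{\{f_Q>0\}}\rd\mu\right)^{\!2} \le \mu(\{f_Q>0\})\int f_Q^{\,2}\rd\mu.$$
Since $\int f_Q\rd\mu=\sum_{n=1}^Q\mu(A_n)$ and $\int f_Q^{\,2}\rd\mu=\sum_{n,m=1}^Q\mu(A_n\cap A_m)$, this yields the finite-$Q$ bound
$$\mu\!\left(\bigcup_{n=1}^Q A_n\right)\ge \frac{\left(\sum_{n=1}^Q\mu(A_n)\right)^2}{\sum_{n,m=1}^Q\mu(A_n\cap A_m)}.$$

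For step (ii), note that $A=\bigcap_{N\ge 1}\bigcup_{n\ge N}A_n$, hence $\mu(A)=\lim_{N\to\infty}\mu\!\left(\bigcup_{n\ge N}A_n\right)$ by downward continuity, which is legitimate here because the measure of interest in the application is the probability measure $\mu_s$. For each fixed $N$ I apply step (i) to the tail family $(A_n)_{n\ge N}$ and pass to the limsup in $Q$:
$$\mu\!\left(\bigcup_{n\ge N}A_n\right)\ge \limsup_{Q\to\infty}\frac{\left(\sum_{n=N}^Q\mu(A_n)\right)^2}{\sum_{n,m=N}^Q\mu(A_n\cap A_m)}.$$
Because $\sum_n\mu(A_n)=\infty$, removing the finitely many initial terms $1\le n<N$ in both numerator and denominator perturbs the ratio by a quantity that vanishes in the limit, so the right-hand side above is independent of $N$ and equals the limsup appearing in the statement. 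Letting $N\to\infty$ finishes the proof.

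The only subtle step is the downward-continuity move $\mu(A)=\lim_N\mu(\bigcup_{n\ge N}A_n)$, which requires $\mu\!\left(\bigcup_{n\ge 1}A_n\right)<\infty$; this is the one place where the finiteness of the ambient measure is used and where one must be careful when stating the lemma in the generality of an arbitrary measure space. In the intended application every $A_n$ lives in the probability space $((\overline{\ZZ}_q((x^{-1})))^s,\mu_s)$, so this causes no difficulty.
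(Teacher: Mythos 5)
Your argument is correct, and it is in fact a complete proof, which is more than the paper gives: the paper disposes of Lemma~\ref{le5} by citing Sprind\v{z}uk (Lemma~5 in Chapter~I of \cite{Sp}) and proving nothing. What you have written is the standard Chung--Erd\H{o}s/Kochen--Stone argument: Cauchy--Schwarz applied to the counting function $f_Q=\sum_{n=1}^Q\mathbf{1}_{A_n}$ gives the finite-union bound, and shifting the starting index plus downward continuity upgrades ``at least one'' to ``infinitely many''. This is essentially the proof one finds in Sprind\v{z}uk, so the mathematical content agrees with the paper's source; the difference is only that your version makes the lemma self-contained. Your caveat about finiteness is well taken and worth keeping: as literally stated for an arbitrary measure space the lemma is false (take Lebesgue measure on $\RR$ and $A_n=[n,n+1)$, where the left side is $0$ and the right side is $+\infty$), so the downward-continuity step does need $\mu(\bigcup_n A_n)<\infty$, which holds in the intended application since $\mu_s$ is a probability measure. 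One small overstatement: you claim that deleting the initial terms leaves the limsup unchanged and independent of $N$; that equality is neither needed nor obviously true (the truncated denominator is smaller, so the tail ratio could have a strictly larger limsup). What your argument actually delivers, and all that is required, is the one-sided estimate that for each fixed $N$ the tail limsup is at least the limsup of the full ratio, since the numerators agree up to a factor tending to $1$ while the tail denominator is no larger; with that phrasing the proof is airtight.
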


\begin{proof}
This is \cite[Lemma~5 in Chapter I]{Sp}. A proof can be found there. 
\end{proof}

\begin{lemma}\label{le4}
Let $P \subseteq (\ZZ_q[x]\setminus\{0\})^s$ such that $(1,\ldots,1) \not\in P$ and for each $(k_1,\ldots,k_s) \in P$ we have $\gcd(k_1,\ldots,k_s)=1$. Let 
\begin{eqnarray*}
\overline{M}=\{(f_1,\ldots,f_s)\in (\ZZ_q((x^{-1})))^s & : & \nu(\{k_1f_1+\cdots +k_s f_s\}) \le -F(r_1,\ldots,r_s)\\
& & \mbox{ for infinitely many } (k_1,\ldots,k_s)\in P\},
\end{eqnarray*}
where $r_i=\deg(k_i)$, and $F: \NN_0^s \rightarrow \NN$ is such that $$\sum_{(k_1,\ldots,k_s) \in P} \frac{1}{q^{F(r_1,\ldots,r_s)}}=\infty.$$ Then $$\mu_s(\overline{M})=1.$$
\end{lemma}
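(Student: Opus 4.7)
The plan is to apply the Erd\H{o}s--type Borel--Cantelli Lemma~\ref{le5} to the events
$$A_{\boldsymbol{k}}:=M_{F(r_1,\ldots,r_s)}(k_1,\ldots,k_s),\qquad \boldsymbol{k}=(k_1,\ldots,k_s)\in P,\; r_i=\deg(k_i).$$
By Lemma~\ref{le2}, $\mu_s(A_{\boldsymbol{k}})=q^{-(F(r_1,\ldots,r_s)-1)}$, and $\overline{M}$ is exactly the set of points lying in infinitely many $A_{\boldsymbol{k}}$, so the divergence hypothesis translates to $\sum_{\boldsymbol{k}\in P}\mu_s(A_{\boldsymbol{k}})=\infty$. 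To invoke Lemma~\ref{le5} I need to show a matching upper bound on $\sum_{n,m\le Q}\mu_s(A_n\cap A_m)$ of order $\big(\sum_{n\le Q}\mu_s(A_n)\big)^2$, i.e.\ a pairwise (near-)independence statement.

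Since multiplication by a unit $\alpha\in\ZZ_q\setminus\{0\}$ alters neither $\nu$ nor the degrees $r_i$, one has $A_{\alpha\boldsymbol{k}}=A_{\boldsymbol{k}}$, so I would first reduce to a sub-collection $P^{\ast}\subseteq P$ containing one representative per orbit under the action of $\ZZ_q^{\ast}$; since every such orbit meets $P$ in at most $q-1$ tuples, the sum $\sum_{\boldsymbol{k}\in P^{\ast}}\mu_s(A_{\boldsymbol{k}})$ still diverges, and the corresponding ``infinitely often'' set is contained in $\overline{M}$. The heart of the proof is the pairwise--independence identity
$$\mu_s(A_{\boldsymbol{k}}\cap A_{\boldsymbol{k}'})=\mu_s(A_{\boldsymbol{k}})\,\mu_s(A_{\boldsymbol{k}'})\qquad\text{for distinct }\boldsymbol{k},\boldsymbol{k}'\in P^{\ast},$$
which I would obtain by mimicking the Walsh expansion used in Lemma~\ref{le2}: writing each of the two cylinder indicators as a finite Walsh series $\sum_i a_i^{(1)}\walb_i$ and $\sum_j a_j^{(2)}\walb_j$ and then applying Lemma~\ref{le1}(1)--(2), the intersection measure collapses to $\sum_{i,j} a_i^{(1)} a_j^{(2)}\prod_{l=1}^s\int_0^1\walb_{ik_l+jk_l'}(f_l)\,df_l$. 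A summand is nonzero precisely when $ik_l+jk_l'=0$ in $\ZZ_q[x]$ for every $l=1,\ldots,s$: the term $i=j=0$ contributes exactly $\mu_s(A_{\boldsymbol{k}})\mu_s(A_{\boldsymbol{k}'})$, the cases with exactly one of $i,j$ zero are excluded because $\ZZ_q[x]$ is an integral domain and no $k_l$ or $k_l'$ vanishes, and when both $i,j\ne 0$ one divides out $d=\gcd(i,j)$ and uses $\gcd(k_1,\ldots,k_s)=\gcd(k_1',\ldots,k_s')=1$ to force $\boldsymbol{k}'$ to be a $\ZZ_q^{\ast}$-scalar multiple of $\boldsymbol{k}$, which is ruled out inside $P^{\ast}$.

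Once this independence is established, enumerating $P^{\ast}$ as $(\boldsymbol{k}^{(n)})_{n\ge 1}$ and writing $S_Q=\sum_{n=1}^Q\mu_s(A_n)\to\infty$, the diagonal contribution $\mu_s(A_n\cap A_n)=\mu_s(A_n)$ produces
$$\sum_{n,m=1}^Q\mu_s(A_n\cap A_m)=S_Q+\sum_{n\ne m}\mu_s(A_n)\mu_s(A_m)\le S_Q+S_Q^2,$$
so Lemma~\ref{le5} yields $\mu_s(\overline{M})\ge\limsup_Q S_Q^2/(S_Q+S_Q^2)=1$. The main obstacle is the Walsh calculation of step two: it is the coprimality assumption on each tuple of $P$, together with the distinctness in $P^{\ast}$ after modding out by $\ZZ_q^{\ast}$, that exactly kills every off-diagonal Walsh mode and yields true (not merely quasi-) pairwise independence; the rest is bookkeeping.
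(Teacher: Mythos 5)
Your proposal is correct and follows essentially the same route as the paper: the same sets $M(k_1,\ldots,k_s)$ with measure $q^{-(F(r_1,\ldots,r_s)-1)}$ from Lemma~\ref{le2}, pairwise independence via the Walsh-series computation as in Lemma~\ref{le3}, and the Borel--Cantelli-type Lemma~\ref{le5} with the denominator $S_Q+S_Q^2$ giving limit $1$. Your extra reduction to one representative $P^{\ast}$ per $\ZZ_q^{\ast}$-orbit is a sensible refinement, since the relation $ik_u+jl_u=0$ for all $u$ really only forces $(l_1,\ldots,l_s)$ to be a unit multiple of $(k_1,\ldots,k_s)$ --- a degenerate case the paper's argument passes over by asserting the contradiction ``unless $(k_1,\ldots,k_s)=(l_1,\ldots,l_s)$'' --- and your observation $A_{\alpha\boldsymbol{k}}=A_{\boldsymbol{k}}$ disposes of it cleanly without affecting divergence.
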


\begin{proof}
For given  $(k_1,\ldots,k_s) \in P$ let $$M(k_1,\ldots,k_s):=\{(f_1,\ldots,f_s) \in (\ZZ_q((x^{-1})))^s \, : \, \nu(\{k_1f_1+\cdots+k_s f_s\}) \le -F(r_1,\ldots,r_s)\}.$$  With the same proof as for Lemma~ \ref{le2} we have $$\mu_s(M(k_1,\ldots,k_s))=\frac{1}{q^{F(r_1,\ldots,r_s)-1}}$$ and hence $$\sum_{(k_1,\ldots,k_s) \in P} \mu_s(M(k_1,\ldots,k_s)) = q \sum_{(k_1,\ldots,k_s) \in P} \frac{1}{q^{F(r_1,\ldots,r_s)}}=\infty,$$ and we can use Lemma~\ref{le5} to obtain 
\begin{equation}\label{ssp1}
\mu_s(\overline{M}) \ge \lim_{R \rightarrow \infty} \frac{\left( \sum\limits_{(k_1,\ldots,k_s)\in P\atop r_1+\cdots +r_s \le R} \mu_s(M(k_1,\ldots,k_s))\right)^2}{\sum\limits_{(k_1,\ldots,k_s)\in P \atop {(l_1,\ldots ,l_s)\in P \atop \sum \deg(k_j), \sum \deg(l_j) \le R}} \mu_s(M(k_1,\ldots,k_s) \cap M(l_1,\ldots,l_s))}.
\end{equation}

Proceeding in the same way as in the proof of Lemma~\ref{le3} we obtain $$\mu_s(M(k_1,\ldots,k_s) \cap M(l_1,\ldots, l_s))=\mu_s(M(k_1,\ldots,k_s)) \mu_s(M(l_1,\ldots,l_s))$$ provided that for all $(i,j) \not=(0,0)$ we have that 
\begin{equation}\label{glstern2}
i k_u +j l_u=0 
\end{equation}
does not hold for all $u=1,\ldots ,s$. Of course, by the definition of $P$ the condition \eqref{glstern2} can only hold if $i,j \not=0$. If for $u \not=v$ we have $$i k_u+j l_u =0 \ \ \ \mbox{ and }\ \ \ i k_v+j l_v=0,$$ then 
$$0 = i k_u k_v+j l_u k_v= k_u(-j l_v)+j l_u k_v= j(l_u k_v-k_u l_v).$$
Hence, if \eqref{glstern2} holds for all $u$, then we have $$l_u k_v=k_u l_v$$ for all $u,v =1,\ldots,s$ what is a contradiction since $\gcd(k_1,\ldots,k_s)=\gcd(l_1,\ldots,l_s)=1$ and $(k_1,\ldots,k_s)\not=(1,\ldots,1)$ and $(l_1,\ldots,l_s)\not=(1,\ldots,1)$ unless $(k_1,\ldots,k_s)=(l_1,\ldots,l_s)$.

If we denote the summands of the sum in the denominator of \eqref{ssp1} in any order by $a_1,a_2,\ldots,a_Q$, then the expression on the right hand side of \eqref{ssp1} can be written as 
\begin{equation}\label{lim1}
\lim_{Q \rightarrow \infty} \frac{\left(\sum_{k=1}^Q a_k\right)^2}{\left(\sum_{k=1}^Q a_k\right)^2+\sum_{k=1}^Q a_k - \sum_{k=1}^Q a_k^2}.
\end{equation}
Since $0 \le a_k \le 1$ for all $k$, and since $\lim_{Q \rightarrow \infty} \sum_{k=1}^Qa_k=\infty$ the limit in \eqref{lim1} is one and the result follows.
\end{proof}

\section{The proof of Theorem~\ref{th1}}\label{proofthm}

We use the representation of $D(\bsx,N)$ given in Lemma~\ref{le6}. For any $\bsk^\ast=(k_1^\ast,\ldots,k_s^\ast) \in \NN^s$ with the property that each of the $k_i^\ast$ is of the form $$k_i^\ast=q^{a_i^\ast -1}+q^{a_i^\ast-2}+l_i^\ast$$ with some $a_i^\ast \ge 3$ and some $0 \le l_i^\ast < q^{a_i^\ast -2}$ we put
\begin{eqnarray*}
\Lambda & := & \Lambda(\bsk^\ast) := \sum_{\bsx \in \QQ^s(q^m)} D(\bsx,N) \walb_{\bsk^\ast}(\bsx)\\
& = & \sum_{k_1,\ldots,k_s =0 \atop (k_1,\ldots,k_s)\not=(0,\ldots,0)}^{q^m -1} \left[ \prod_{j=1}^s \sum_{x_j \in \QQ(q^m)} J_{k_j}(x_j) \walb_{k_j^\ast}(x_j) \right] G(N,w(k_1,\ldots,k_s)).
\end{eqnarray*}

By the definition of the $J_k$ and by the orthonormality of Walsh functions (see Lemma~\ref{le1}) we have $$\theta(k):=\sum_{x \in \QQ(q^m)} J_k(x) \walb_{k^\ast}(x)=0$$ unless we are in one of the following three cases (with $k=\kappa q^{a-1}+k'$ and $k^\ast=q^{a^\ast -1}+(k^\ast)'$):
\begin{enumerate}
\item \label{cs1} $k$ is such that $k'=k^\ast$, i.e., $k=\kappa q^{a^\ast +c-1}+k^\ast$ for some $c \in \NN$ and $\kappa \in \{1,\ldots,q-1\}$. In this case we have  $$\theta(k)= \frac{1}{q^{a^\ast +c}} \frac{1}{1-\omega_q^{-\kappa}}.$$
\item \label{cs2} $k$ is such that $k=k^\ast$. In this case we have  $$\theta(k)=\frac{1}{q^{a^\ast}}\left(\frac{1}{2}+\frac{1}{\omega_q -1}\right)-\frac{1}{2 q^m}.$$
\item \label{cs3} $k$ is such that $k=(k^\ast)'=q^{a^\ast-2}+l^\ast$. In this case we have  $$\theta(k)=\frac{1}{q^{a^\ast}} \frac{1}{\omega_q-1}.$$
\end{enumerate}

We write $(k_j^\ast)'=:\widetilde{k}_j=q^{a_j^\ast -2}+\cdots$ (note that $k_j^\ast$ is uniquely determined by $\widetilde{k}_j$),
\begin{eqnarray*}
\beta_j(\widetilde{k}_j,0) & := & 0\\
\beta_j(\widetilde{k}_j,1) & := & q^{a_j^\ast -1}
\end{eqnarray*}
and for $t \in \NN_0$ and $u_j \in \{tq-t+2,\ldots,tq-t+q\}$ we put $$\beta_j(\widetilde{k}_j,u_j)=q^{a_j^\ast -1}+q^{a_j^\ast+t}(u_j-(tq-t+1)).$$ Then for $u_j \ge 2$ we have $$\widetilde{k}_j+\beta_j(\widetilde{k}_j,u_j)) = (k_j^\ast)' + q^{a_j^\ast -1}+q^{a_j^\ast+t}(u_j-(tq-t+1))=k_j^\ast + q^{a_j^\ast +t} \kappa,$$ where $\kappa=u_j-(tq-t+1)$. Hence, according to Case~\ref{cs1}, we have $$|\theta(\widetilde{k}_j+\beta_j(\widetilde{k}_j,u_j)))|\le c_1(q) \frac{1}{q^{a_j^\ast +t+1}} \le c_1(q) \frac{1}{q^{a_j^\ast +u_j/q}} $$ for some $c_1(q)>0$. Similarly, $$\widetilde{k}_j+\beta_j(\widetilde{k}_j,0)) = (k_j^\ast)' $$ and hence, according to Case~\ref{cs3}, we have $$|\theta(\widetilde{k}_j+\beta_j(\widetilde{k}_j,0)))|\le c_2(q) \frac{1}{q^{a_j^\ast}}$$ for some $c_2(q)>0$, and $$\widetilde{k}_j+\beta_j(\widetilde{k}_j,1)) = (k_j^\ast)' +q^{a_j^\ast -1}=k_j^\ast$$ and hence, according to Case~\ref{cs2}, we have $$|\theta(\widetilde{k}_j+\beta_j(\widetilde{k}_j,1)))|\le c_3(q) \frac{1}{q^{a_j^\ast}}$$ for some $c_3(q)>0$. Summing up, for all $u_j \ge 0$ we have 
\begin{equation}\label{inequjtheta}
|\theta(\widetilde{k}_j+\beta_j(\widetilde{k}_j,u_j)))|\le c_4(q) \frac{1}{q^{a_j^\ast +u_j/q}}
\end{equation}
for some $c_4(q)>0$.

Now we have $$\Lambda=\sum_{u_1,\ldots,u_s \ge 0} \left[\prod_{j=1}^s \theta(\widetilde{k}_j+\beta_j(\widetilde{k}_j,u_j))\right] G(N,w(\widetilde{k}_1+\beta_1(\widetilde{k}_1,u_1),\ldots,\widetilde{k}_s+\beta_s(\widetilde{k}_s,u_s)))$$ where the summation is over all $u_j$ with $\widetilde{k}_j+\beta_j(\widetilde{k}_j, u_j) < q^m$ for all $j=1,\ldots,s$. Then for any $J \in \NN$ we have
\begin{eqnarray}\label{defLambda}
|\Lambda| & \ge & \left|\left[\prod_{j=1}^s \theta(\widetilde{k}_j)\right] G(N,w(\widetilde{k}_1,\ldots,\widetilde{k}_s))\right|\\
&&-\left|\sum_{0 \le u_1,\ldots,u_s \le J \atop (u_1,\ldots,u_s)\not=(0,\ldots,0)} \left[\prod_{j=1}^s \theta(\widetilde{k}_j+\beta_j(\widetilde{k}_j,u_j))\right] G(N,w(\widetilde{k}_1+\beta_1(\widetilde{k}_1,u_1),\ldots,\widetilde{k}_s+\beta_s(\widetilde{k}_s,u_s)) \right| \nonumber \\
&&-\left|\sum_{u_1,\ldots,u_s \ge 0 \atop \exists j:\ u_j >J} \left[\prod_{j=1}^s \theta(\widetilde{k}_j+\beta_j(\widetilde{k}_j,u_j))\right] G(N,w(\widetilde{k}_1+\beta_1(\widetilde{k}_1,u_1),\ldots,\widetilde{k}_s+\beta_s(\widetilde{k}_s,u_s)) \right|.\nonumber 
\end{eqnarray}

Note that $|G(N,w(k_1,\ldots,k_s))| \le q N$ always. Therefore and using \eqref{inequjtheta} for the last sum in \eqref{defLambda} we have 
\begin{eqnarray*}
\left| \Sigma \right| \le \frac{q N}{q^{a_1^\ast + \cdots + a_s^\ast}} \sum_{u_1,\ldots,u_s \ge 0 \atop \exists j:\ u_j >J} q^{-\frac{u_1}{q}-\cdots - \frac{u_s}{q}}\le c_5(q,s) \frac{N}{q^{a_1^\ast + \cdots + a_s^\ast}} \frac{1}{q^{J/q}},
\end{eqnarray*}
with some $c_5(q,s)>0$ depending only on $q$ and on $s$.\\

Let the function $F$ from Lemma~\ref{le4} be such that 
\begin{equation}\label{defF}
q^{F(r_1,\ldots,r_s)} = q^{r_1+\cdots+r_s} (r_1+\cdots +r_s)^s \log (r_1+\cdots+r_s).
\end{equation}
Let $P$ from Lemma~\ref{le4} be given by 
\begin{eqnarray}\label{defP}
P & = & \bigg\{(k_1,\ldots,k_s) \in (\ZZ_q[x]\setminus \{0\})^s \ : \ (k_1,\ldots,k_s)\not=(1,\ldots,1),\ \gcd(k_1,\ldots,k_s)=1 \nonumber\\
& &\hspace{0.5cm} k_i=q^{a_i-1}+\ell_i \mbox{ for some } a_i \in \NN \mbox{ and } 0 \le \ell_i < q^{a_i -1} \mbox{ for all } i=1,\ldots, s\nonumber\\
& &\hspace{0.5cm} \mbox{ and } \gcd\bigg(k_i,x \prod_{j=1}^J \prod_{\kappa=1}^{q-1} (1+\kappa x^j)\bigg)=1 \mbox{ for all } i=1,\ldots,s\bigg\}.
\end{eqnarray}


\begin{lemma}
With $F$ as in \eqref{defF} and $P$ as in \eqref{defP} we have $$\sum_{(k_1,\ldots,k_s)\in P} \frac{1}{q^{F(r_1,\ldots,r_s)}}=\infty,$$ where $r_i =\deg(k_i)$.
\end{lemma}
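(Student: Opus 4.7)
The plan is to combine a Möbius-type count of $s$-tuples in $P$ of prescribed degrees with an integral comparison for the resulting series. Set $g_J(x):=x\prod_{j=1}^{J}\prod_{\kappa=1}^{q-1}(1+\kappa x^j)\in\ZZ_q[x]$ and $d_J:=\deg g_J$. A first application of Möbius inversion over monic polynomials in $\ZZ_q[x]$ shows that the number of monic $k\in\ZZ_q[x]$ of degree $r\ge d_J$ with $\gcd(k,g_J)=1$ equals $c_1\,q^r$, where
$$c_1:=\prod_{p\mid g_J,\ p\text{ monic irreducible}}(1-q^{-\deg p})>0$$
depends only on $q$ and $J$.

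Writing $P_{r_1,\ldots,r_s}:=\{(k_1,\ldots,k_s)\in P:\deg k_i=r_i\}$ and applying a second Möbius inversion, this time over the common monic divisor of the $k_i$ (which must itself be coprime to $g_J$), one obtains
$$|P_{r_1,\ldots,r_s}|=\sum_{\substack{d\text{ monic},\ \gcd(d,g_J)=1\\ \deg d\le\min_i r_i}}\mu(d)\prod_{i=1}^{s}A(r_i-\deg d),$$
where $A(n)$ is the number of monic polynomials of degree $n$ coprime to $g_J$ (so $A(n)=c_1q^n$ for $n\ge d_J$ and $A(n)\le q^n$ always). Splitting the contribution with $\deg d\le\min_ir_i-d_J$ (where $A$ is given by the exact formula) from the tail, and bounding the tail via $\sum_{\delta>M}q^\delta\cdot q^{-s\delta}=O(q^{(1-s)M})$, one arrives at
$$|P_{r_1,\ldots,r_s}|\ \ge\ \tfrac12\,c_1^s\rho_0\,q^R\ \ge\ c_2\,q^R,\qquad R:=r_1+\cdots+r_s,$$
for all $r_1,\ldots,r_s\ge R_0$ sufficiently large, where $\rho_0=\prod_{p\nmid g_J}(1-q^{-s\deg p})>0$ is a convergent Euler product (again requiring $s\ge 2$) and $c_2=c_2(q,s,J)>0$.

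Inserting the bound and regrouping by $R$,
$$\sum_{(k_1,\ldots,k_s)\in P}\frac{1}{q^{F(r_1,\ldots,r_s)}}\ \ge\ c_2\sum_{r_1,\ldots,r_s\ge R_0}\frac{1}{R^{s}\log R}\ \ge\ c_3\sum_{R\ge R_1}\frac{R^{s-1}}{R^s\log R}\ =\ c_3\sum_{R\ge R_1}\frac{1}{R\log R},$$
using that the number of $s$-tuples $(r_1,\ldots,r_s)$ with $r_i\ge R_0$ and $\sum_ir_i=R$ is $\binom{R-sR_0+s-1}{s-1}\gg R^{s-1}$ for $R$ large. The last sum diverges by the integral test.

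The only delicate step is the tail estimate in the second Möbius inversion, where $A(r_i-\deg d)$ no longer matches the clean formula $c_1q^{r_i-\deg d}$; its control reduces to the convergence of $\sum_d q^{-s\deg d}$ over monic $d$, which is exactly what forces $s\ge 2$ (the case $s=1$ is in fact degenerate, since then $P=\emptyset$, and must be handled by different means within the overall proof of Theorem~\ref{th1}).
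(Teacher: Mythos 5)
Your argument is essentially the paper's: both proofs run a M\"obius-type inclusion--exclusion over the common divisor of $(k_1,\ldots,k_s)$ (and over divisors of $p=x\prod_{j=1}^J\prod_{\kappa=1}^{q-1}(1+\kappa x^j)$ to enforce $\gcd(k_i,p)=1$), deduce a lower bound of order $q^{r_1+\cdots+r_s}$ for the number of admissible tuples of prescribed degrees, and then regroup by the total degree $R$ to reduce everything to the divergence of $\sum_R 1/(R\log R)$. The execution differs only mildly: the paper bounds the truncated M\"obius sum $\sum_{\deg\ell\le\min_i a_i,\ \gcd(\ell,p)=1}\mu_q(\ell)q^{-s\deg\ell}$ from below by $(q-1)/4$ via a crude two-term estimate, valid for every degree vector, while you identify the limit as an Euler product $\rho_0>0$ and control the tail, which forces the restriction $r_i\ge R_0$; that restriction is harmless for divergence, and your counting identity and tail estimate are correct for $s\ge 2$.

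The one substantive deviation is the case $s=1$. The lemma is asserted for all $s\in\NN$, and the paper proves $s=1$ as well: there the gcd condition in \eqref{defP} is treated as vacuous, and the count $\sum_{k\in W_q(a),\,\gcd(k,p)=1}1\ge q^a2^{-r}$ already yields $T\ge 2^{-r}\sum_{a}(a\log a)^{-1}=\infty$. Your claim that $P=\emptyset$ for $s=1$ rests on the literal reading $\gcd(k_1)=1\Rightarrow k_1=1$, which contradicts the very statement to be proved and is clearly not the intended reading; deferring that case to ``different means'' leaves it unproved. The repair is immediate with your own first M\"obius count ($A(r)=c_1q^r$ for $r$ large, hence $\sum_r c_1 q^{r}q^{-F(r)}=c_1\sum_r (r\log r)^{-1}=\infty$), so you should include it rather than discard the case.
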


\begin{proof}
We put $$T:= \sum_{(k_1,\ldots,k_s)\in P} \frac{1}{q^{F(r_1,\ldots,r_s)}}.$$ Let $W_q(a)$ be the set of all monic polynomials over $\FF_q$ with degree $a$, i.e. $$W_q(a)=\{k \in \FF_q[x]\ : \ \deg(k)=a \ \mbox{ and } \ k \ \mbox{ is monic}\}.$$ Put $p:=x \prod_{j=1}^J \prod_{\kappa=1}^{q-1} (1+\kappa x^j)=u p_1^{\alpha_1} \cdots p_r^{\alpha_r}$ with $u \in \ZZ_q$, irreducible factors $p_1,\ldots,p_r\in \ZZ_q[x]$ and $\alpha_1,\ldots,\alpha_r \in \NN$. Then we have
\begin{eqnarray}\label{sumT}
T & = & \sum_{a_1,\ldots ,a_s \atop a_1+\cdots +a_s\not =0}\frac{1}{q^{F(a_1,\ldots,a_s)}} \underbrace{\sum_{k_1 \in W_q(a_1)\atop \gcd(k_1,p)=1} \ldots  \sum_{k_s \in W_q(a_s)\atop \gcd(k_s,p)=1}}_{\gcd(k_1,\ldots,k_s)=1} 1.
\end{eqnarray}

Let $\mu_q$ the polynomial analog to the M\"obius-$\mu$ function defined by $\mu_q(a)=1$ and $\mu_q(a f)=\mu(f)$ for $a \in \FF_q$ and $f \in \FF_q[x]$, $\mu_q(f)=0$ if there exists an irreducible $g \in \FF_q[x]$ with $g^2|f$ and $\mu_q(f)=(-1)^{\rho}$ if $f$ splits up in $\rho$ different irreducible factors. We just remark that $\mu_q$ is multiplicative and refer to \cite[p.~42]{car} for more informations.

First we consider the case $s=1$. Then the inner sum in \eqref{sumT} reduces to (we omit the index ``1'' for the sake of simplicity)
\begin{eqnarray*}
\sum_{k \in W_q(a)\atop \gcd(k,p)=1} 1 & = & \sum_{k \in W_q(a)} \sum_{\ell | \gcd(k,p)} \mu_q(\ell)\\
& = & \sum_{\ell | p} \mu_q(\ell) \sum_{k \in W_q(a) \atop \ell | k} 1\\
& = & \sum_{\ell | p} \mu_q(\ell) \sum_{c \in \FF_q[x] \atop \ell c \in W_q(a)}1.
\end{eqnarray*}
If $\ell c \in W_q(a)$, then the leading coefficient of the polynomial $c$ is uniquely determined by $\ell$ and $\deg(c)=a-\deg(\ell)$. Hence we have $$\sum_{c \in \FF_q[x] \atop \ell c \in W_q(a)}1=q^{a-\deg(\ell)}$$ and therefore we obtain
$$\sum_{k \in W_q(a)\atop \gcd(k,p)=1} 1 = q^a \sum_{\ell|p} \frac{\mu_q(\ell)}{q^{\deg(\ell)}}.$$ Using the factorization of $p$ we now obtain
\begin{eqnarray*}
\sum_{k \in W_q(a)\atop \gcd(k,p)=1} 1 & = & q^a \sum_{d_1=0}^{\alpha_1} \ldots \sum_{d_r=0}^{\alpha_r} \frac{\mu_q(p_1^{d_1}\cdots p_r^{d_r})}{q^{\deg(p_1^{d_1}\cdots p_r^{d_r})}}\\
& = & q^a \prod_{j=1}^r \sum_{d=0}^{\alpha_j} \frac{\mu_q(p_j^{d_j})}{q^{\deg(p_j^{d_j})}}\\
& = & q^a \prod_{j=1}^r \left(1-\frac{1}{q^{\deg(p_j)}}\right) \\
& \ge & q^a \frac{1}{2^r}.
\end{eqnarray*}
Inserting this result into \eqref{sumT} yields $$T \ge \frac{1}{2^r} \sum_{a=1}^\infty \frac{1}{a \log a} = \infty$$ as claimed.

Now assume that $s \ge 2$. As above we begin by studying the inner sum in \eqref{sumT}. We have
\begin{eqnarray*}
\underbrace{\sum_{k_1 \in W_q(a_1)\atop \gcd(k_1,p)=1} \ldots  \sum_{k_s \in W_q(a_s)\atop \gcd(k_s,p)=1}}_{\gcd(k_1,\ldots,k_s)=1} 1 & = &  \sum_{k_1 \in W_q(a_1)\atop \gcd(k_1,p)=1} \ldots  \sum_{k_s \in W_q(a_s)\atop \gcd(k_s,p)=1} \sum_{\ell | \gcd(k_1,\ldots,k_s)} \mu_q(\ell)\\
& = & \sum_{\ell \in \FF_q[x]\atop \deg(\ell) \le \min(a_1,\ldots,a_s)} \mu_q(\ell) \prod_{i=1}^s \left(\sum_{k_i \in W_q(a_i)\atop {\gcd(k_i,p)=1 \atop \ell|k_i}}1\right).
\end{eqnarray*}
For any factor of the above product we have (we omit the index ``$i$'' for the sake of simplicity)
\begin{eqnarray*}
\sum_{k \in W_q(a)\atop {\gcd(k,p)=1 \atop \ell|k}}1 & = & \sum_{c \in \FF_q[x] \atop { \ell c \in W_q(a) \atop \gcd(\ell c,p)=1}}1 = \left\{
\begin{array}{ll}
0 & \mbox{ if } \gcd(\ell,p)>1,\\
\sum\limits_{c \in \FF_q[x] \atop { \ell c \in W_q(a) \atop \gcd(c,p)=1}}1 & \mbox{ if } \gcd(\ell,p)=1. 
\end{array}\right.
\end{eqnarray*}
Using the same arguments as above it can be shown that $$\sum_{c \in \FF_q[x] \atop { \ell c \in W_q(a) \atop \gcd(c,p)=1}}1= q^{a-\deg(\ell)} A(p),$$ where $A(p):=\prod_{j=1}^r \left(1-q^{-\deg(p_j)}\right) \ge 2^{-r}.$
Hence we obtain
\begin{eqnarray*}
\underbrace{\sum_{k_1 \in W_q(a_1)\atop \gcd(k_1,p)=1} \ldots  \sum_{k_s \in W_q(a_s)\atop \gcd(k_s,p)=1}}_{\gcd(k_1,\ldots,k_s)=1} 1 & = &  \sum_{\deg(\ell) \le \min(a_1,\ldots,a_s) \atop \gcd(\ell,p)=1} \frac{\mu_q(\ell)}{q^{s \deg(\ell)}} q^{a_1+\cdots +a_s} A(p)^s\\
& \ge & q^{a_1+\cdots +a_s} A(p)^s \inf_{x \in \NN} \sum_{\deg(\ell) \le x \atop \gcd(\ell,p)=1} \frac{\mu_q(\ell)}{q^{s \deg(\ell)}}. 
\end{eqnarray*}
We show that 
\begin{equation}\label{lbdB}
B:= \inf_{x \in \NN} \sum_{\deg(\ell) \le x \atop \gcd(\ell,p)=1} \frac{\mu_q(\ell)}{q^{s \deg(\ell)}} \ge \frac{q-1}{4}.
\end{equation}
For any $x \in \NN$ we have 
\begin{eqnarray*}
\sum_{\deg(\ell) \le x \atop \gcd(\ell,p)=1} \frac{\mu_q(\ell)}{q^{s \deg(\ell)}} & = &  \sum_{\deg(\ell) =0 \atop \gcd(\ell,p)=1} 1 + \sum_{1 \le \deg(\ell) \le x \atop \gcd(\ell,p)=1} \frac{\mu_q(\ell)}{q^{s \deg(\ell)}}\\
& = & q-1 +  \sum_{1 \le \deg(\ell) \le x \atop \gcd(\ell,p)=1} \frac{\mu_q(\ell)}{q^{s \deg(\ell)}}.
\end{eqnarray*}
for the last sum we have
\begin{eqnarray*}
\left| \sum_{1 \le \deg(\ell) \le x \atop \gcd(\ell,p)=1} \frac{\mu_q(\ell)}{q^{s \deg(\ell)}} \right| & \le & \frac{1}{q^s} \sum_{\deg(\ell)=1 \atop \gcd(\ell,p)=1} 1 + \sum_{d=2}^\infty \frac{1}{q^{s d}} \sum_{\deg(\ell)=d}1\\
&  \le & \frac{(q-1)^2}{q^s}+ \frac{q-1}{q^{s-1}(q^{s-1}-1)},
\end{eqnarray*}
where we used $\sum\limits_{\deg(\ell)=1 \atop \gcd(\ell,p)=1} 1 \le (q-1)^s$ since $x|p$. Hence it follows that $$\sum_{\deg(\ell) \le x \atop \gcd(\ell,p)=1} \frac{\mu_q(\ell)}{q^{s \deg(\ell)}} \ge (q-1) \left(1-\frac{q-1}{q^s}-\frac{1}{q^{s-1}(q^{s-1}-1)}\right) \ge \frac{q-1}{4}$$ and hence \eqref{lbdB} is shown. 

Hence $$\underbrace{\sum_{k_1 \in W_q(a_1)\atop \gcd(k_1,p)=1} \ldots  \sum_{k_s \in W_q(a_s)\atop \gcd(k_s,p)=1}}_{\gcd(k_1,\ldots,k_s)=1} 1 \ge q^{a_1+\cdots +a_s} A(p)^s \frac{q-1}{4}.$$ Inserting this in \eqref{sumT} we obtain
\begin{eqnarray*}
T & \ge &  A(p)^s \frac{q-1}{4} \sum_{a_1,\ldots ,a_s \atop a_1+\cdots +a_s\not =0}\frac{1}{q^{F(a_1,\ldots,a_s)}} q^{a_1+\cdots +a_s}\\
& \ge & \frac{1}{2^{r s}} \frac{q-1}{4} \sum_{d=1}^\infty \frac{1}{d^s \log d} \sum_{a_1,\ldots,a_s=0 \atop a_1+\cdots+a_s=d}^\infty 1\\
& = & \frac{1}{2^{r s}} \frac{q-1}{4} \sum_{d=1}^\infty \frac{1}{d^s \log d} {s+d-1 \choose d}\\
& \ge & \frac{1}{2^{r s}} \frac{q-1}{4} \frac{1}{(s-1)!} \sum_{d=1}^\infty \frac{1}{d \log d}\\
& = & \infty,
\end{eqnarray*}
where we used that ${s+d-1 \choose d} \ge \frac{d^{s-1}}{(s-1)!}$.
\end{proof}

Now we use Lemma~\ref{le4} and find that the set $\overline{M}$ for our choice of $F$ as in \eqref{defF} and $P$ as in \eqref{defP} has measure $\mu_s(\overline{M})=1$.

Next we consider the finite collection of $s$-tuples $$(\beta_1(k_1,u_1),\ldots,\beta_s(k_s,u_s))$$ for $u_1,\ldots,u_s=0,1,\ldots,J$ but not all equal to 0. Note that each of these $\beta_i(k_i,u_i)$ considered as element of $\ZZ_q[x]$ is 0 or relatively prime to $k_i$ for each $(k_1,\ldots,k_s)$ which is an element from $P$ defined above.

Now we use Lemma~\ref{le3} where we choose $\overline{P}$ as $\overline{P}=P$ from \eqref{defP} and for any choice of $u_1,\ldots,u_s=0,1,\ldots,J$ but not all equal to 0, we choose the $\beta_i(k_i)$ from Lemma~\ref{le3} as $\beta_i(k_i)=\beta_i(k_i,u_i)$. Then for the corresponding set $\widetilde{M}:=\widetilde{M}(u_1,\ldots,u_s)$ of Lemma~\ref{le3} we have $\mu_s(\widetilde{M})=0$.

We set $$M:= \overline{M} \setminus \bigcup_{u_1,\ldots,u_s=0 \atop (u_1,\ldots,u_s)\not=(0,\ldots,0)}^J \widetilde{M}(u_1,\ldots,u_s)$$ and find that $\mu_s(M)=1$. 

Now we make a suitable choice for $\bsf=(f_1,\ldots,f_s)$ and for $\bsk^\ast$. Let $(f_1,\ldots,f_s) \in M$ and let $(\widetilde{k}_1,\ldots,\widetilde{k}_s) \in P$ be such that
$$\nu(\{\widetilde{k}_1 f_1+\cdots +\widetilde{k}_sf_s\}) \le -F(r_1,\ldots,r_s) \le -(r_1+\cdots+r_s)$$ and $$\nu(\{(\widetilde{k}_1+\beta_1(\widetilde{k}_1,u_1))f_1+\cdots + (\widetilde{k}_s+\beta_s(\widetilde{k}_s,u_s))f_s\}) \ge -\frac{r_1+\cdots+r_s}{2},$$ where $r_i=\deg(\widetilde{k}_i)$ and $\widetilde{k}_i=q^{\widetilde{a_i}-1}+\widetilde{\ell}_i$. By the definition of $M$ there are infinitely many such $s$-tuples $(\widetilde{k}_1,\ldots,\widetilde{k}_s)$.

Let $m:=\lfloor F(r_1,\ldots,r_s)\rfloor$ and $N=q^{m-1}$. We analyze the first summand in \eqref{defLambda}: We have $$-w(\widetilde{k}_1,\ldots,\widetilde{k}_s)=\nu(\{\widetilde{k}_1f_1+\cdots+\widetilde{k}_s f_s\}) \le -F(r_1,\ldots,r_s) \le -m$$ and hence $w \ge m$. This means that $G(N,w(\widetilde{k}_1,\ldots,\widetilde{k}_s))=N$ and hence we obtain
$$\left|\left[\prod_{j=1}^s \theta(\widetilde{k}_j)\right] G(N,w(\widetilde{k}_1,\ldots,\widetilde{k}_s))\right| \ge c_6(q,s) \frac{N}{q^{\widetilde{a}_1+\cdots +\widetilde{a}_s}}.$$ 

Now we turn to the second  summand in \eqref{defLambda}. We have 
\begin{eqnarray*}
\lefteqn{-w(\widetilde{k}_1+\beta_1(\widetilde{k}_1,u_1),\ldots,\widetilde{k}_s+\beta_s(\widetilde{k}_s,u_s))}\\
& = &  \nu(\{(\widetilde{k}_1+\beta_1(\widetilde{k}_1,u_1))f_1+\cdots + (\widetilde{k}_s+\beta_s(\widetilde{k}_s,u_s))f_s\})\\
& \ge & -\frac{r_1+\cdots +r_s}{2}
\end{eqnarray*}
and hence $$w(\widetilde{k}_1+\beta_1(\widetilde{k}_1,u_1),\ldots,\widetilde{k}_s+\beta_s(\widetilde{k}_s,u_s)) \le \frac{r_1+\cdots +r_s}{2}.$$ This means that 
\begin{eqnarray*}
|G(N,w(\widetilde{k}_1+\beta_1(\widetilde{k}_1,u_1),\ldots,\widetilde{k}_s+\beta_s(\widetilde{k}_s,u_s))|
& \le & c_7(q) q^w \\
& \le & c_7(q) q^{\frac{r_1+\cdots +r_s}{2}}\\
& = & c_8(q,s) q^{\frac{\widetilde{a}_1+\cdots+\widetilde{a}_s}{2}}
\end{eqnarray*}
and hence we obtain 
\begin{eqnarray*}
\left|\sum_{u_1,\ldots,u_s \ge 0 \atop \exists j:\ u_j >J} \left[\prod_{j=1}^s \theta(\widetilde{k}_j+\beta_j(\widetilde{k}_j,u_j))\right] G(N,w(\widetilde{k}_1+\beta_1(\widetilde{k}_1,u_1),\ldots,\widetilde{k}_s+\beta_s(\widetilde{k}_s,u_s)) \right| \\ \le c_9(q,s) \frac{q^{\frac{\widetilde{a}_1+\cdots+\widetilde{a}_s}{2}}}{q^{\widetilde{a}_1+\cdots+\widetilde{a}_s}}.
\end{eqnarray*}

Altogether we have 
\begin{eqnarray*}
|\Lambda| & \ge &  c_6(q,s) \frac{N}{q^{\widetilde{a}_1+\cdots +\widetilde{a}_s}} - c_9(q,s) \frac{q^{\frac{\widetilde{a}_1+\cdots+\widetilde{a}_s}{2}}}{q^{\widetilde{a}_1+\cdots+\widetilde{a}_s}}-c_5(q,s) \frac{N}{q^{a_1^\ast + \cdots + a_s^\ast}} \frac{1}{q^{J/q}}\\
& \ge & c_{10}(q,s) \frac{N}{q^{\widetilde{a}_1+\cdots +\widetilde{a}_s}}
\end{eqnarray*}
for $J$ large enough and for $\deg(\widetilde{k}_1)+\cdots +\deg(\widetilde{k}_s)$ large enough. Now
\begin{eqnarray*}
\frac{N}{q^{\widetilde{a}_1+\cdots +\widetilde{a}_s}} & \ge & c_{11}(q,s) q^{F(r_1,\ldots,r_s)-r_1-\cdots -r_s} \\
& = & c_{11}(q,s)(r_1+\cdots+r_s) \log(r_1+\cdots+r_s) \\
& \ge & c_{12}(q,s) (\log N)^s \log \log N.
\end{eqnarray*}

From the definition of $\Lambda$ it follows that there exists an $\bsx \in \QQ^s(q^m) \subseteq[0,1)^s$ such that $$|D(\bsx,N)| \ge  c_{13}(q,s) (\log N)^s \log \log N$$ and the proof of Theorem~\ref{th1} is finished. \hfill $\qed$

\begin{small}
\noindent\textbf{Authors' address:}\\
\noindent Gerhard Larcher, Friedrich Pillichshammer\\
Institut f\"{u}r Finanzmathematik, Universit\"{a}t Linz, Altenbergerstr.~69, 4040 Linz, Austria\\
E-mail: \\ \texttt{gerhard.larcher@jku.at},\\ 
\texttt{friedrich.pillichshammer@jku.at}
\end{small}

\end{document}